\let\c@table\c@figure 
\let\ftype@table\ftype@figure 
\numberwithin{equation}{section}
\numberwithin{figure}{section}
\numberwithin{table}{section}
\newtheorem{theorem}{Theorem}[section]
\newtheorem{proposition}[theorem]{Proposition}
\newtheorem{corollary}[theorem]{Corollary}
\newtheorem{lemma}[theorem]{Lemma}
\theoremstyle{definition}
\newtheorem{definition}[theorem]{Definition}
\newtheorem*{remark}{Remark}
\theoremstyle{plain}
\newcommand{\thistheoremname}{}
\newtheorem*{genericthm*}{\thistheoremname}
\newenvironment{namedthm*}[1]
  {\renewcommand{\thistheoremname}{#1}%
   \begin{genericthm*}}
  {\end{genericthm*}}
\newcommand{\obar}[1]{\overline{#1}}
\newcommand{\N}{\mathbb{N}}
\newcommand{\R}{\mathbb{R}}
\newcommand{\Z}{\mathbb{Z}}
\newcommand{\cC}{\mathcal{C}}
\newcommand{\cP}{\mathcal{P}}
\newcommand{\cS}{\mathcal{S}}
\newcommand{\abs}[1]{\left\lvert #1 \right\rvert}
\newcommand{\lrp}[1]{\left(#1\right)}
\newcommand{\lrb}[1]{\left[#1\right]}
\newcommand{\qqand}{\qquad\text{and}\qquad}
\title{Arithmetic Polygons and Sums of Consecutive Squares}
\author{Jack Anderson}
\author{Amy Woodall}
\author{Alexandru Zaharescu}
\address[Jack Anderson]{Department of Mathematics, University of Illinois at Urbana-Champaign, Urbana, IL 61801, USA.}
\email{jacka4@illinois.edu}
\address[Amy Woodall]{Department of Mathematics, University of Illinois at Urbana-Champaign, Urbana, IL 61801, USA.}
\email{amyew3@illinois.edu}
\address[Alexandru Zaharescu]{Department of Mathematics,University of Illinois at Urbana-Champaign, Urbana, IL 61801, USA,
and 
``Simion Stoilow'' Institute of Mathematics of the Romanian Academy,~21 
Calea Grivitei 
Street, P. O. Box 1-764, Bucharest 014700, Romania.}
\email{zaharesc@illinois.edu}
\keywords{arithmetic polygons, square pyramidal numbers, triples in arithmetic progressions, integer points on surfaces}
\subjclass{Primary: 11B99, Secondary: 11B25}
\begin{document}

\begin{abstract}
    We introduce and study arithmetic polygons. We show that these arithmetic polygons are connected to triples of square pyramidal numbers. For every odd $N\geq3$, we prove that there is at least one arithmetic polygon with $N$ sides. We also show that there are infinitely many arithmetic polygons with an even number of sides.
\end{abstract}

\maketitle

\section{Introduction}\label{Introduction}

The Pythagorean Theorem states that the three side lengths $x,y,z$ of a right triangle are related by $x^2 + y^2 = z^2$. One can find an infinite family of integer solutions to this equation that are parametrically related; this classical construction is Euclid's formula. Interestingly, we can also find an example where $x$, $y$, and $z$ are consecutive integers, namely \[
    3^2 + 4^2 = 5^2.
\]

We consider the extension of this example to polygons with more than $3$ sides by defining an arithmetic polygon.
\begin{definition}\label{D:ArithmeticPolygons}
    An \textit{arithmetic polygon} is a polygon with integer side lengths and a special vertex denoted $O$ that satisfies the following properties:
    \begin{enumerate}
        \item\label{I:ArithmeticPropertyLengths} As one traverses the polygon starting and finishing at $O$, each side has length one greater than the length of the side preceding it.
        \item\label{I:ArithmeticPropertyIntersection} For each side of the polygon, there is a line perpendicular to the side which passes through both $O$ and one of the vertices at either end of that side.
        \item\label{I:ArithmeticPropertyDegenerate} There are no degenerate vertices. That is, there are no vertices with an angle of $0$ or $\pi$ radians.
    \end{enumerate}
\end{definition}

One sees that the famous $3$-$4$-$5$ triangle is an example of an arithmetic polygon. We will see that if the sides of a given arithmetic polygon have lengths $a+1$ through $c$, then there is some $b$ satisfying $a+1<b<c$ such that
\begin{equation}\label{eq:original-statement-polygons}
    (a+1)^2 + (a+2)^2 + \cdots + b^2 = (b+1)^2 + (b+2)^2 + \cdots + c^2.
\end{equation}
In the case of the $3$-$4$-$5$ triangle, we have $a=2$, $b=4$, $c=5$. In other words, from any arithmetic polygon, one can find a solution to \eqref{eq:original-statement-polygons}, which we call the Sum of Consecutive Squares (SoCS) Problem. Furthermore, we shall show that given any solution to the SoCS Problem, one can construct at least one---but potentially many---arithmetic polygons.

\begin{theorem}\label{T:SolutionsPolygonsCorrespondence}
    There is a surjective, many-to-one correspondence from arithmetic polygons to solutions to \eqref{eq:original-statement-polygons}.
\end{theorem}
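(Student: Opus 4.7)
The plan is to define the natural map from arithmetic polygons to SoCS solutions, verify it is well-defined via the Pythagorean theorem, then invert it by an explicit geometric construction to obtain surjectivity, with the many-to-one character arising from binary orientation choices along the construction. Label the vertices $O=V_0,V_1,\dots,V_N=O$ in traversal order, so that the side $s_i=V_{i-1}V_i$ has length $a+i$, and set $d_i:=|OV_i|$ (so $d_0=d_N=0$). Property~(2) of Definition~\ref{D:ArithmeticPolygons} says that the line perpendicular to $s_i$ through $O$ meets $s_i$ at one of its endpoints, so triangle $OV_{i-1}V_i$ has a right angle at $V_{i-1}$ or at $V_i$. Writing $\epsilon_i=+1$ in the first case and $\epsilon_i=-1$ in the second, the Pythagorean theorem yields $d_i^2-d_{i-1}^2=\epsilon_i(a+i)^2$, and telescoping with $d_0=d_N=0$ gives
\[
 \sum_{i=1}^N \epsilon_i (a+i)^2 = 0.
\]

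Next I would show the sign sequence $(\epsilon_i)$ is non-increasing: any pattern $\epsilon_i=-1,\epsilon_{i+1}=+1$ at an interior index $1\le i\le N-1$ forces both $s_i$ and $s_{i+1}$ to be perpendicular to $OV_i$ at the common vertex $V_i$, hence collinear, contradicting property~(3). Combined with $\epsilon_1=+1$ (since $d_0=0<d_1$) and the impossibility of all $\epsilon_i$ being $+1$, this gives $\epsilon_i=+1$ for $i\le k$ and $\epsilon_i=-1$ for $i>k$, for some $1\le k\le N-1$; moreover $k\ge 2$ since $k=1$ would force $(a+1)^2=\sum_{i=2}^N(a+i)^2$, which is impossible. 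Setting $b:=a+k$ and $c:=a+N$ rewrites the sum as the SoCS equation~\eqref{eq:original-statement-polygons} with $a+1<b<c$, completing the definition of the map.

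For surjectivity, given a SoCS solution $(a,b,c)$, set $k=b-a$, $N=c-a$, and $\ell_i=a+i$. Place $V_0=O=(0,0)$ and $V_1=(\ell_1,0)$, and inductively place $V_i$ (for $2\le i\le N$) as one of the two points at distance $\ell_i$ from $V_{i-1}$ satisfying $d_i^2=d_{i-1}^2+\epsilon_i\ell_i^2$ with $\epsilon_i=+1$ for $i\le k$ and $\epsilon_i=-1$ for $i>k$; the SoCS equation forces $d_N=0$, so $V_N=O$ and the polygon closes. Properties~(1) and~(2) are built into the construction, and property~(3) is verified case by case: for an interior vertex of the $+$-phase or $-$-phase, the angle at $V_i$ lies strictly in $(0,\pi)$ for every orientation choice, since the adjacent sides are perpendicular to distinct rays from $O$; at the transition vertex $V_k$, placing $V_{k-1}$ and $V_{k+1}$ on opposite sides of the line $OV_k$ gives an angle equal to the sum of two arctangents, which lies strictly in $(0,\pi)$. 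The many-to-one property then follows from the binary orientation choice at each step; already the mirror image of any fixed realization yields a distinct polygon mapping to the same SoCS solution.

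The main obstacle will be verifying property~(3) at the transition vertex $V_k$: the naive argument fails in the sub-case where $V_{k-1}$ and $V_{k+1}$ are on the same side of $OV_k$, because the polygon angle is then $|\arctan(\ell_k/d_{k-1})-\arctan(\ell_{k+1}/d_{k+1})|$, which collapses to $0$ exactly when $\ell_kd_{k+1}=\ell_{k+1}d_{k-1}$. Committing to the "opposite side" orientation at this one step sidesteps the issue uniformly and produces a non-degenerate polygon for every SoCS solution; the remaining non-degeneracy checks follow from the Pythagorean relations together with positivity of all side lengths $\ell_i$ and intermediate distances $d_i$.
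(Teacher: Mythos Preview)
Your forward direction (polygon $\to$ SoCS) is essentially the paper's argument, formalized cleanly with the signs $\epsilon_i$ and the observation that a $(-1,+1)$ pattern would make $s_i$ and $s_{i+1}$ both perpendicular to $OV_i$, hence collinear. The reverse construction is also close in spirit to the paper's, though the paper builds two arms from $O$ and then rotates one arm so the far endpoints coalesce, whereas you build a single path that closes automatically because $d_N=0$.

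The genuine gap is at the vertex $O$ itself. In your construction $V_N=O$ is forced, so the angle at $O$ is $\angle V_1OV_{N-1}$, and nothing you have written prevents $V_{N-1}$ from landing on the line $OV_1$, which would make that angle $0$ or $\pi$. Your case analysis covers interior vertices of each phase and the transition vertex $V_k$, but never $O$. The paper confronts exactly this point: in its two-arm construction the angle at $O$ starts at $\pi$ and is changed by the rotation, and if no rotation is needed it flips the last binary choice. In your framework the analogous fix is to show that, of the two tangent points from $V_{N-2}$ to the circle of radius $d_{N-1}$, at most one lies on the line $OV_1$ (they are symmetric about $OV_{N-2}$, so both lying on that line is impossible), and that choosing the other one does not disturb non-degeneracy at $V_{N-2}$ or $V_{N-1}$; you should add this.

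Conversely, your worry about the transition vertex $V_k$ is unnecessary. If $\angle V_{k-1}V_kV_{k+1}\in\{0,\pi\}$ then $V_{k-1},V_k,V_{k+1}$ are collinear; since $OV_{k-1}\perp s_k$ and $OV_{k+1}\perp s_{k+1}$, both $V_{k-1}$ and $V_{k+1}$ would be the (unique) foot of the perpendicular from $O$ to that common line, hence equal. But $d_{k-1}^2-d_{k+1}^2=\ell_{k+1}^2-\ell_k^2=2b+1>0$, so $V_{k-1}\neq V_{k+1}$. Thus the angle at $V_k$ is non-degenerate for either choice of $V_{k+1}$, and you need not commit to the ``opposite side'' placement there.
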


In order to find arithmetic polygons, we study how one can find solutions to the SoCS Problem. We can simplify our search by using square pyramidal numbers. The $n$th square pyramidal number $P_n$ is the number of spheres that it takes to create a pyramid with a square base of side length $n$. The first few square pyramidal numbers are \[
    1, \, 5, \, 14, \, 30, \, 55, \, 91, \, 140, \, 204, \, 285, \, 385, \dots.
\]
Square pyramidal numbers have been known since antiquity, and continue to be studied through to the present. More details about this sequence can be found on the On-Line Encyclopedia of Integer Sequences \cite[Entry A000330]{oeis}.
From the definition, we see that \[
    P_n = \sum_{k = 1}^n k^2,
\]
and by induction one can evaluate the sum exactly as \[
    P_n = \frac{n(n+1)(2n+1)}{6}.
\]
In terms of $P_n$, we can restate Equation \eqref{eq:original-statement-polygons} as $P_b - P_{a} = P_c - P_b$. Simplifying this equation, we wish to find $a,b,c \in \Z$ that satisfy 
\begin{equation}\label{E:PyramidalEquality}
    P_a + P_c = 2 P_b
\end{equation}
with $0 < a+1 < b < c$.

\begin{table}[t]
    \centering
    \begin{tabular}{cccc}
       (2, 4, 5) & (54, 60, 65) & (170, 180, 189) & (17, 34, 42) \\
       (9, 12, 14) & (77, 84, 90) & (209, 220, 230) & (350, 364, 377) \\
       (20, 24, 27) & (104, 112, 119) & (252, 264, 275) & (405, 420, 434) \\
       (35, 40, 44) & (135, 144, 152) & (299, 312, 324) & (464, 480, 495)
    \end{tabular}
    \caption{Some solutions to the Sum of Consecutive Squares Problem.}
    \label{tab:some-solutions}
\end{table}

Once restated as an equality of square pyramidal numbers, solutions to the SoCS problem can be easily computed for small $a,b,c$. See Table \ref{tab:some-solutions} for a list of all solutions $(a,b,c)$ with $c - a \leq 33$ (Theorem \ref{T:BoundOnSolutionsForFixedN} shows that this list is complete).
Many of the solutions given in Table \ref{tab:some-solutions} appear to be related. In fact, all solutions except for the solution $(17, 34, 42)$ are of the form \[
    (2k^2 + k - 1, 2k^2 + 2k, 2k^2 + 3k)
\]
for some $k \in \N$. In general, every $k \in \N$ gives rise to a solution to the SoCS problem of this form. We call these solutions the \textit{parameterized solutions}. However, the solution $(17, 34, 42)$ shows that not every solution to the SoCS problem is a parameterized solution. We must therefore develop more sophisticated methods to find more solutions outside of these parameterized solutions.

In Section~\ref{S:SoCS}, we describe two algorithms for finding new solutions:
\begin{enumerate}
    \item \textit{Finding new solutions of fixed length.} If we fix $c-a=N$, and $b-a=\ell$, then Equation~\eqref{E:PyramidalEquality} reduces to a quadratic in $b$ for which we can easily find solutions. Iterating this process over $\ell$ between $1$ and $N-1$ gives us all solutions for $c-a=N$.
    \item \textit{Generating new solutions from known solutions.} Given a known ``base solution'' $(a_0, b_0, c_0)$, a point in $\R^3$, we consider the plane passing through this point and the line $x=y=z$, which contains the trivial solutions to Equation~\eqref{E:PyramidalEquality}. The intersection of this plane and the surface given by Equation~\eqref{E:PyramidalEquality} can be described with a quadratic equation involving two variables. After a change of variables, this equation reduces to a generalized Pell equation, for which we can find an infinite family of solutions. Reversing this change of variables gives us new solutions to the SoCS Problem.
\end{enumerate}

Using the second algorithm, we show the following results.
\begin{theorem}\label{T:ExistenceOfSolutions}
    For each odd $N \geq 3$, there is at least one solution $(a,b,c)$ to \eqref{eq:original-statement-polygons} with $N = c - a$. Furthermore, for each odd $N \geq 3$, there is an infinite family $(a_n, b_n, c_n)$ of solutions to \eqref{eq:original-statement-polygons} with $0 < a_n + 1 < b_n < c_n$ and with $c_n - a_n$ odd, all of which lie in the same plane (which is unique for each $N$).
\end{theorem}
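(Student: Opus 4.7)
The plan is to combine the parameterized family (which supplies one base solution for each odd $N \geq 3$) with the plane-section algorithm sketched in the introduction. For the existence claim, set $N = 2k+1$ with $k \geq 1$ and verify by direct substitution that $(a_0, b_0, c_0) = (2k^2 + k - 1,\, 2k^2 + 2k,\, 2k^2 + 3k)$ satisfies Equation~\eqref{E:PyramidalEquality} together with $0 < a_0 + 1 < b_0 < c_0$ and $c_0 - a_0 = 2k+1 = N$.

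To produce an infinite family, let $\Pi_N$ denote the plane spanned by $(a_0, b_0, c_0)$ and the line $\ell \colon x = y = z$. Its direction $(-(k+1),\,0,\,k)$ depends only on $k$, so $\Pi_N$ is unique for each $N$. Parameterize points of $\Pi_N$ by $(a, b, c) = (t - (k+1)s,\, t,\, t + ks)$. Since $\ell$ lies on the pyramidal surface $\{P_a + P_c = 2 P_b\}$, substituting into \eqref{E:PyramidalEquality} and dividing out the trivial factor $s$ leaves a residual conic $\mathcal{Q}_k(s,t) = 0$. Reading $\mathcal{Q}_k$ as a quadratic in $t$, its discriminant works out to $12 + 12\bigl(12k^2(k+1)^2 - 1\bigr) s^2$; demanding that this equal $36Y^2$ yields the generalized Pell equation
\[
    3Y^2 - D_k\,s^2 = 1, \qquad D_k := 12k^2(k+1)^2 - 1,
\]
with explicit base solution $(Y, s) = (2k(k+1), 1)$. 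A check modulo $4$ shows $D_k \equiv 3 \pmod 4$, so $D_k$ is never a perfect square.

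Rewriting the equation as $X^2 - 3 D_k \, s^2 = 3$ with $X = 3Y$ puts it into standard Pell form over $\Z[\sqrt{3D_k}]$; the fundamental unit then generates an infinite sequence of integer solutions $(X_n, s_n)$ starting from $(X_0, s_0) = (6k(k+1), 1)$. Integrality of $Y_n = X_n / 3$ is automatic since the Pell equation itself forces $3 \mid X_n$, so back-substitution produces integer triples $(a_n, b_n, c_n) \in \Pi_N$ satisfying \eqref{E:PyramidalEquality}. The strict ordering $0 < a_n + 1 < b_n < c_n$ reduces to $s_n > 0$ and $t_n > (k+1) s_n$, both automatic once $(Y_n, s_n)$ is large enough, and the parity $c_n - a_n = N s_n$ is odd precisely when $s_n$ is; since the Pell recursion is invertible modulo $2$, its orbit on $(\Z/2)^2$ is purely periodic, so starting from $s_0 = 1$ the residue $s_n \bmod 2$ revisits $1$ infinitely often. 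The main obstacle is assembling these congruence and growth checks into a single argument uniform in $k$: the algebraic reduction to Pell form is elementary, but ensuring simultaneously the integrality, ordering, and odd parity of the resulting triples requires careful tracking of the Pell recursion modulo small integers.
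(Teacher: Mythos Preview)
Your plan is essentially the paper's own proof: specialize the plane-section/Pell-generator machinery (Proposition~\ref{prop:Pell-generator}) to the parameterized base solution $(2k^2+k-1,\,2k^2+2k,\,2k^2+3k)$. Your coordinates $(t-(k+1)s,\,t,\,t+ks)$ on $\Pi_N$ are a tidier choice than the paper's $(u,v)$, and your mod-$4$ check that $D_k = 12k^2(k+1)^2 - 1$ is a nonsquare is more elementary than invoking Corollary~\ref{C:APos} (whose elliptic-curve argument is only needed for \emph{arbitrary} base solutions).

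One point you skip: you verify $3\mid X_n$ but not that $t_n$ is an integer. From the quadratic formula one gets $t_n = \tfrac{1}{2}\bigl(-1 + (2k^2+2k+1)s_n \pm Y_n\bigr)$, so you need $s_n \not\equiv Y_n \pmod 2$; this follows from $3Y_n^2 - D_k s_n^2 = 1$ with $D_k$ odd. On parity, your periodicity argument gives only infinitely many odd $s_n$, which suffices for the theorem by passing to a subsequence; the paper instead shows \emph{every} $s_n$ is odd by checking (mod $4$) that the fundamental unit of $\Z[\sqrt{3D_k}]$ has $p$ odd, whence $s_{n+1}\equiv p\,s_n\equiv s_n\pmod 2$. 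Finally, the paper replaces your asymptotic ``large enough'' ordering claim with explicit closed formulas for $a_n,b_n,c_n$ in $(p_n,q_n)$ that make $0<a_n+1<b_n<c_n$ immediate for all $n\ge 1$.
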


\begin{theorem}\label{T:EvenSolutions}
    There is an infinite number of solutions $(a,b,c)$ to \eqref{eq:original-statement-polygons} with $0 < a + 1 < b < c$ and with $c-a$ even.
\end{theorem}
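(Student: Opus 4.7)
The plan is to mirror the strategy behind Theorem \ref{T:ExistenceOfSolutions}: locate a single base solution $(a_0,b_0,c_0)$ of \eqref{E:PyramidalEquality} with $c_0-a_0$ even, then invoke the plane-and-Pell construction of Algorithm 2 from Section \ref{S:SoCS} to produce an infinite family of integer triples on the plane through $(a_0,b_0,c_0)$ and the line $x=y=z$. Points on that plane have the parameterized form $(t+sa_0,\,t+sb_0,\,t+sc_0)$, so every integer solution arising this way satisfies $c-a=s(c_0-a_0)$. Once $c_0-a_0$ is even, every member of the resulting family automatically has $c-a$ even; it then remains only to exhibit the base, to carry out the Pell reduction, and to verify that the triples produced eventually satisfy $0<a+1<b<c$.

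For the base solution I take $(a_0,b_0,c_0)=(11,50,63)$. One verifies directly that $P_{11}+P_{63}=506+85344=85850=2\cdot 42925=2P_{50}$, and $c_0-a_0=52$ is even. This triple does not belong to the parameterized family, but it can be located by running Algorithm 1 with the gap $\delta=(b-a)-(c-b)$ restricted to even values: taking $p=b-a=39$ and $q=c-b=13$ so that $\delta=26$, the quadratic in $m=2b+1$ coming from \eqref{E:PyramidalEquality} has discriminant $12[12(pq)^2-\delta^2(\delta^2-1)]=5616^2$, a perfect square, and the root $m=101$ returns exactly $(11,50,63)$.

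Substituting the plane parameterization into \eqref{E:PyramidalEquality} and following Algorithm 2, the intersection curve reduces, after the change of variables $u=2t+1$ and $v=u+35s$, to the generalized Pell equation
\[
3v^{2}-3887\,s^{2}=1,
\]
for which $(v,s)=(36,1)$ is the fundamental solution (corresponding to the base itself). Since $3887$ is not a perfect square, multiplying by the fundamental unit of $\Z[\sqrt{11661}]$ produces an infinite sequence of positive integer solutions $(v_n,s_n)$. For each $n$ we recover $t_n=(v_n-35s_n-1)/2$ and the integer triple $(a_n,b_n,c_n)=(t_n+11s_n,\,t_n+50s_n,\,t_n+63s_n)$ on the surface \eqref{E:PyramidalEquality}. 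Since $b_n-a_n=39s_n$ and $c_n-b_n=13s_n$, the condition $a_n+1<b_n<c_n$ holds for every $s_n\geq 1$; a short estimate of $v_n$ against $35s_n+1$ gives $t_n\geq 0$, so $a_n\geq 11s_n>0$. Finally $c_n-a_n=52\,s_n$ is even for every $n$, producing the required infinite family.

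The main obstacle is the first step. The parameterized family exhibited in Section \ref{Introduction} supplies only bases with $c_0-a_0$ odd, and a brief modular argument shows that the Pell equation arising from any such base takes the form $3v^{2}-Ds^{2}=1$ with $D\equiv 3\pmod{4}$; reducing mod $4$ then forces $s$ to be odd, so $c_n-a_n=s_n(c_0-a_0)$ remains odd throughout any such family. Consequently even $c-a$ cannot be produced by iterating from any parameterized base, and an ``off-family'' base like $(11,50,63)$ must be located directly. Once such a base is in hand, the rest of the argument is routine Pell theory.
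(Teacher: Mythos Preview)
Your argument is correct and follows essentially the same route as the paper: pick a single even base solution (you use $(11,50,63)$; the paper uses $(66,159,198)$, and its remark after the proof explicitly says any even entry from Table~\ref{tab:many-non-param-solutions} would do), then run the plane-and-Pell machinery of Proposition~\ref{prop:Pell-generator} to obtain infinitely many integer triples with the same parity of $c-a$. Your change of variables $u=2t+1$, $v=u+35s$ is a cleaner normalization than the paper's, yielding the compact equation $3v^{2}-3887s^{2}=1$ with very small coefficients, whereas the paper's choice leads to an $A$ with a period-$212$ continued fraction and hundred-digit $p,q$; this is a genuine practical advantage of your base, though not a conceptual one.

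Two small points you should make explicit. First, integrality of $t_n=(v_n-35s_n-1)/2$: reduce $3v^{2}-3887s^{2}=1$ modulo $2$ to get $v\not\equiv s\pmod 2$, hence $v_n-35s_n-1\equiv v_n-s_n-1\equiv 0\pmod 2$. Second, $11661=3\cdot 13^{2}\cdot 23$ is not squarefree, so speaking of ``the fundamental unit of $\Z[\sqrt{11661}]$'' is slightly off; what you actually need is the fundamental solution $(p,q)$ of $p^{2}-11661q^{2}=1$, which exists because $11661$ is not a perfect square, and then $v_{n+1}=v_np+3887s_nq$, $s_{n+1}=3v_nq+s_np$ gives the recursion (the factor of $3$ in the first coordinate is automatically absorbed since $11661\equiv 0\pmod 3$). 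Your closing paragraph explaining why a parameterized base cannot work is a nice addition: it re-derives, in concrete form, the parity-preservation clause of Proposition~\ref{prop:Pell-generator}.
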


\begin{remark}
    In light of Theorem~\ref{T:SolutionsPolygonsCorrespondence}, Theorems~\ref{T:ExistenceOfSolutions} and \ref{T:EvenSolutions} can be restated in terms of arithmetic polygons. If $(a,b,c)$ is a solution to the SoCS Problem, then all corresponding arithmetic polygons have $N = c - a$ sides. Then, for each odd $N \geq 3$ we have an arithmetic polygon with $N$ sides, and each $N$ generates an infinite family of arithmetic polygons with an odd number of sides. Furthermore, there is an infinite number of arithmetic polygons with an even number of sides.
\end{remark}

In Section~\ref{S:ArithmeticPolygons}, we study the relation between solutions to the SoCS Problem and arithmetic polygons, beginning with a proof of Theorem~\ref{T:SolutionsPolygonsCorrespondence}. 
We then discuss the convexity (or, rather, the lack thereof) of arithmetic polygons. In particular, we show the following.

\begin{theorem}\label{T:Only2ConvexPolygons}
    There are only 2 distinct (up to rigid transformations) convex arithmetic polygons.
\end{theorem}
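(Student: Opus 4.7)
My approach is to reduce convexity of an arithmetic polygon to a single scalar inequality at the vertex $O$, then show this inequality is satisfied by exactly two polygons.

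First, I would pin down the perpendicularity pattern. Write $s_i = a + i$ for the length of the $i$-th side (from $V_{i-1}$ to $V_i$) and $r_i := |OV_i|$. Property~(2) places the foot of the perpendicular from $O$ at one of the two endpoints of each side, so each side is either ``Case~A'' (foot at $V_{i-1}$, giving $r_i^2 = r_{i-1}^2 + s_i^2$) or ``Case~B'' (foot at $V_i$, giving $r_{i-1}^2 = r_i^2 + s_i^2$). The endpoint conditions $r_0 = r_N = 0$ force $s_1$ to be Case~A and $s_N$ to be Case~B. Moreover, a ``valley'' where $s_i$ is Case~B and $s_{i+1}$ is Case~A would make both adjacent sides perpendicular to $OV_i$ and hence parallel, producing a degenerate vertex at $V_i$ and violating property~(3). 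This forces the sign pattern $+^{b-a}-^{c-b}$, where $(a,b,c)$ is the associated SoCS solution from Theorem~\ref{T:SolutionsPolygonsCorrespondence}.

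Next, I would reduce convexity to one inequality. Placing $O$ at the origin and writing $(r_i, \theta_i)$ for the polar coordinates of $V_i$, each right triangle $OV_{i-1}V_i$ gives $\theta_i - \theta_{i-1} = \arctan\bigl(s_i/\min(r_{i-1}, r_i)\bigr)$ for $2 \le i \le N-1$. A direct accounting of exterior angles shows that, thanks to the sign pattern above, each intermediate exterior angle automatically lies in $(0, \pi)$, and the exterior angles at $V_1$ and $V_{N-1}$ always lie in $[\pi/2, \pi)$. The only nontrivial convexity constraint therefore comes from the exterior angle at $O$ being positive, equivalently
\[
\omega := \sum_{i=2}^{N-1} \arctan\!\left(\frac{s_i}{\min(r_{i-1}, r_i)}\right) < \pi,
\]
where $\omega$ is the interior angle of the polygon at $O$.

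For small $N$, I would enumerate SoCS solutions by solving the quadratic in $a$ that arises from each value of $\ell = b - a \in \{2, \ldots, N-1\}$. This yields $(2, 4, 5)$ as the sole solution for $N = 3$, $(9, 12, 14)$ for $N = 5$, and no solutions for $N \in \{4, 6\}$. Direct computation then gives $\omega = \arctan(4/3) \approx 53^\circ$ and $\omega = \arctan(11/10) + \arctan(12/\sqrt{221}) + \arctan(13/14) \approx 130^\circ$ respectively, so the $3$-$4$-$5$ triangle and the pentagon coming from $(9, 12, 14)$ are both convex.

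The hard part is showing $\omega \geq \pi$ for every SoCS solution with $N \geq 7$. The plan is to exploit the identity $\max_i r_i^2 = P_b - P_a = \tfrac12(P_c - P_a)$ coming from the SoCS equation, which gives a uniform comparability between the $r_i$'s and the side lengths. For the parameterized family $(2k^2+k-1,\, 2k^2+2k,\, 2k^2+3k)$ with $N = 2k+1 \geq 7$, one has $r_i \asymp c\sqrt{i}$ throughout the Case~A range, so $\arctan\bigl(s_i/\min(r_{i-1}, r_i)\bigr) \asymp \arctan(1/\sqrt{i-1})$, and since $\sum_{i=1}^{k} \arctan(1/\sqrt{i}) \to \infty$, one obtains $\omega > \pi$ for large $k$; the boundary case $k = 3$ is verified numerically ($\omega \approx 193^\circ$). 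An analogous estimate, applied uniformly in the SoCS data, covers the non-parameterized solutions (which all have $N \geq 25$, starting with $(17, 34, 42)$) and the even-$N$ solutions guaranteed by Theorem~\ref{T:EvenSolutions}. Making this lower bound on $\omega$ fully uniform across every SoCS solution with $N \geq 7$ is the delicate step; combining it with the small-$N$ enumeration completes the proof.
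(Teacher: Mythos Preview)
Your reduction to a single scalar constraint at $O$ is essentially correct and matches what the paper does implicitly: once the perpendicularity pattern $+^{b-a}-^{c-b}$ is fixed, the intermediate vertices of the ``always-turn-inward'' candidate automatically have interior angles in $(0,\pi)$, so convexity lives or dies at $O$ (and possibly at the far vertex $V_{b-a}$, which you should mention explicitly, though there the angle is a sum of two acute angles and hence also fine).

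The genuine gap is the treatment of $N\ge 7$. You establish $\omega\to\infty$ for the parameterized family and check $k=3$ numerically, but this leaves every $k$ between $4$ and ``large enough'' unhandled; the claim that non-parameterized solutions all have $N\ge 25$ itself requires the enumeration you only carried out through $N\le 6$; and the ``analogous estimate'' for non-parameterized and even solutions is never specified. You flag this yourself (``the delicate step''), but without it the proof is incomplete.

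The paper closes exactly this gap, and does so by making your divergence quantitative. In Theorem~\ref{T:ConvexityBound} it lower-bounds each angular step by $\alpha_\ell \ge 1/\sqrt{3\ell}\ge 1/\sqrt{3N}$ (the same $\arctan$ terms appearing in your $\omega$) and feeds this into the interior-angle sum $(N-2)\pi$; the result is that a convex arithmetic polygon must have $N\le 8+12\pi^2<127$. This is precisely the uniform bound you were missing, and it is \emph{solution-agnostic}: it applies to parameterized, non-parameterized, and even-$N$ solutions simultaneously, with no asymptotics or case splits. The proof then enumerates the $67$ SoCS solutions with $N\le 126$ via Theorem~\ref{T:BoundOnSolutionsForFixedN}, uses your observation that each solution has at most one convex candidate, and checks those $67$ candidates directly.

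In short: your $\omega$ and the paper's $\sum\alpha_\ell$ are the same object. Rather than trying to prove $\omega>\pi$ solution-by-solution for $N\ge 7$, convert the bound $\arctan(s_i/r_{i-1})\ge 1/\sqrt{3N}$ into an explicit cutoff on $N$, then enumerate up to that cutoff.
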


The SoCS Problem is similar to the cannonball problem, which asks if there are any square pyramidal numbers that are themselves a square. In other words, do there exist positive integers $b$ and $c$ such that
\begin{equation}
    1^2 + 2^2 + \cdots + (b-1)^2 + b^2 = c^2.
\end{equation}

In 1918, Watson \cite{watson-proof} proved the assertion (first proposed by Lucas \cite{lucas-proposal} in 1875) that the only nontrivial solution is $(b, c) = (24, 70)$, where we see that
\begin{equation}
    1^2 + 2^2 + \cdots + 23^2 + 24^2 = 4900 = 70^2.
\end{equation}
(A more elementary proof was given by Anglin in 1990, which can be found in \cite{anglin-elementary-proof}.)

One could then define a related cannonball polygon by amending the first condition in our definition of an arithmetic polygon:

\begin{enumerate}
    \item As one traverses around the polygon, starting at point $O$, the first side has length 1. Then, every side has length exactly one greater than the side preceding it except for the final side which may have any integer length.
\end{enumerate}

From any solution to the cannonball problem, we can construct a number of related cannonball polygons (see Figure~\ref{F:CannonballPolygonGood}) using methods similar to Theorem \ref{T:SolutionsPolygonsCorrespondence}. However, there are cannonball polygons which do not relate to any solution of the cannonball problem. For an example of such a polygon, see Figure~\ref{F:CannonballPolygonalBad}, which has side lengths $1$ through $36$, and then a final side has length $52$. In this example, $52^2 = 2704$ whereas $1^2+\cdots+36^2 = 16206$. Therefore, we do not have the same correspondence with cannonball polygons as we have with arithmetic polygons. We do briefly note that in the example of Figure~\ref{F:CannonballPolygonalBad}, one finds
\begin{equation}
    1^2 + \cdots + 30^2 = 31^2 + \cdots + 36^2 + 52^2.
\end{equation}

Other authors have considered generalizations of the cannonball problem. In \cite{bennett, oranges}, the authors used the theory of Diophantine approximation to prove their results; we will apply similar methods to prove Theorems \ref{T:ExistenceOfSolutions} and \ref{T:EvenSolutions}.

\begin{figure}[t]
    \centering
    \begin{subfigure}{0.44\linewidth}
        \centering
        \includegraphics[width=\textwidth]{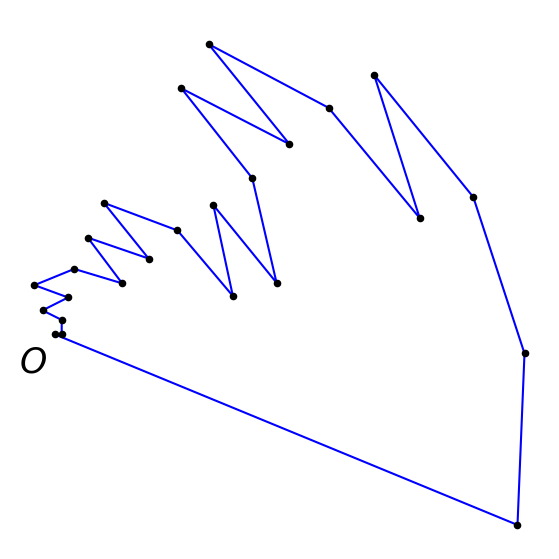}
        \caption{A cannonball polygon constructed from the (nontrivial) solution to the cannonball problem.}\label{F:CannonballPolygonGood}
    \end{subfigure}
    \hfill
    \begin{subfigure}{0.44\linewidth}
        \centering
        \includegraphics[width=\textwidth]{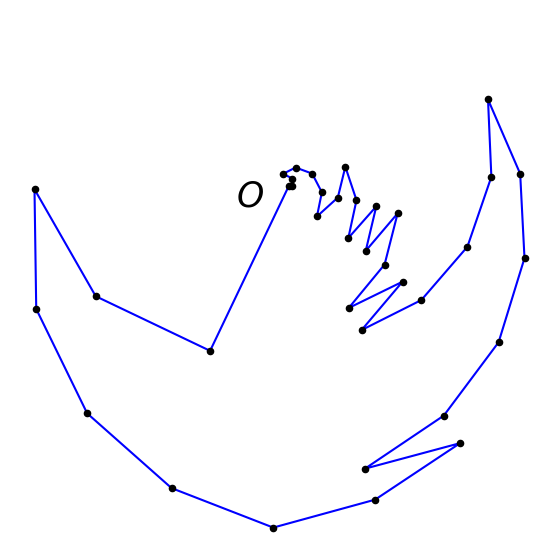}
        \caption{A cannonball polygon with side lengths 1 though 36 and with a final side length of 52. This polygon does not correspond to any solution to the cannonball problem.}\label{F:CannonballPolygonalBad}
    \end{subfigure}
    \caption{Two examples of cannonball polygons. In both examples, the side lengths increase as one travels clockwise around the polygon starting from $O$.}\label{F:CannonballPolygons}
\end{figure}

\section{Sums of Consecutive Squares}\label{S:SoCS}

We first describe the algorithm for finding every solution $(a,b,c)$ with a fixed length $c-a=N$. In particular, we prove the following bound.

\begin{theorem}\label{T:BoundOnSolutionsForFixedN}
    For any integer $N>1$, there can be at most $2(N-1)$ possible solutions $(a,b,c)$ to \eqref{E:PyramidalEquality} with $0 < a+1 < b < c$ such that $c-a=N$.
\end{theorem}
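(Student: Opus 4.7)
The plan is to reduce the problem to a quadratic equation in a single variable for each fixed shape of the solution. Since we are fixing $N = c - a$, every solution $(a,b,c)$ is determined by the pair $(a, \ell)$, where $\ell = b - a$. The condition $0 < a + 1 < b < c$ forces $\ell \in \{2, 3, \ldots, N-1\}$, so there are at most $N - 1$ admissible values of $\ell$. It therefore suffices to show that, for each fixed $\ell$, equation \eqref{E:PyramidalEquality} has at most two integer solutions in $a$.

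To do this, I would substitute $b = a + \ell$ and $c = a + N$ into \eqref{E:PyramidalEquality} and clear denominators using $6 P_n = n(n+1)(2n+1)$. This turns $P_a + P_c - 2P_b = 0$ into
\begin{equation*}
    a(a+1)(2a+1) + (a+N)(a+N+1)(2a+2N+1) - 2(a+\ell)(a+\ell+1)(2a+2\ell+1) = 0,
\end{equation*}
which is a polynomial identity in $a$ (with $N$ and $\ell$ viewed as parameters). The key computation is to expand and collect powers of $a$: the coefficient of $a^3$ equals $2 + 2 - 4 = 0$, so it vanishes identically, and after dividing by $6$ one is left with the quadratic
\begin{equation*}
    (N - 2\ell)\,a^2 + \bigl(N(N+1) - 2\ell(\ell+1)\bigr)\,a + \bigl(P_N - 2 P_\ell\bigr) = 0.
\end{equation*}

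The final step is to observe that this polynomial is not identically zero for any admissible $\ell$. If $N \neq 2\ell$, the leading coefficient is nonzero and we have a genuine quadratic, which has at most two roots. If $N = 2\ell$, the leading coefficient vanishes, but a short check shows the coefficient of $a$ becomes $2\ell^2 \neq 0$, so the equation is linear and admits at most one solution. In either case, each of the at most $N-1$ values of $\ell$ contributes at most two candidate values of $a$, and hence at most $2(N-1)$ solutions in total. There is no serious obstacle here beyond the bookkeeping of the expansion; the only thing to be careful about is verifying the non-vanishing of the leading nontrivial coefficient so that we never land in a case where the equation degenerates to $0 = 0$ and could yield infinitely many $a$.
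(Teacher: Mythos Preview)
Your proposal is correct and follows essentially the same approach as the paper: fix $\ell = b - a$, substitute to eliminate two of the three variables, observe that the cubic terms cancel so one is left with a quadratic, and count at most two roots per value of $\ell$. The only cosmetic difference is that the paper solves for $b$ (writing $a = b - \ell$, $c = b + N - \ell$) while you solve for $a$; your treatment is in fact slightly more careful, since you explicitly dispose of the degenerate case $N = 2\ell$ where the quadratic collapses to a linear equation, a point the paper's proof passes over in silence.
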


\begin{proof}
    We first write $N$ as a sum of two integers via $N=(c-b)+(b-a)$. Fix an integer $\ell$ such that $0<\ell< N$. We write
    \begin{align*}
        c-b &= N-\ell, \\
        b-a &= \ell.
    \end{align*}
    By finding all solutions for each $\ell$, we can find all the solutions for our given $N$. We now substitute
    \begin{equation}\label{E:bSubNPolygonProof}
    \begin{aligned}
        a &= b-\ell, \\
        c &= b+N-\ell
    \end{aligned}
    \end{equation}
    into Equation \eqref{E:PyramidalEquality}. 
    The $b^3$ terms cancel, and we are left with a quadratic in $b$ with the coefficients being in terms of $N$ and $\ell$. In particular, we have
    \begin{align*}
        0 &= 6(N-2\ell)b^2 + 6(N^2+2\ell^2-2N\ell+N-2\ell)b \\
        &\qquad+ 2N^3-4\ell^3- 6N^2\ell+6N\ell^2+3N^2+6\ell^2-6N\ell+N-2\ell.
    \end{align*}
    Solving this quadratic in $b$, we find
    \begin{equation}
        b = \frac{-3N^2-6\ell^2+6N\ell-3N+6\ell \pm \sqrt{3}\sqrt{\Delta}}{6(N-2\ell)},
    \end{equation}
    with \[
        \Delta = -N^4 + 8N^3\ell - 12N^2\ell^2 + 8N\ell^3 - 4\ell^4 + N^2 - 4N\ell + 4\ell^2.
    \]
    Equation \eqref{E:bSubNPolygonProof} gives us the corresponding $a$ and $c$ values.
    Note that it is not guaranteed that $b$ is a positive integer or that $a>0$, but we have a necessary condition for a solution. There are at most two possible solutions for each $\ell$. Furthermore, there are $N-1$ possible values that $\ell$ can take. Therefore, there are at most $2(N-1)$ possible solutions for our given $N$.
\end{proof}

Using the algorithm above, we can calculate all of the solutions $(a,b,c)$ with $c - a \leq X$ for any given $X \in \N$. Table \ref{tab:many-non-param-solutions} gives all of the non-parameterized solutions with $c - a \leq \numprint{10048}$; here we omit the many parameterized solutions within this range since these solutions are easy to reproduce.

\begin{table}[t]
    \centering
    \begin{tabular}{ccc}
        (17, 34, 42) & (2844, 3839, 4484) & (384, 5222, 6579) \\
        (3, 38, 48) & (677, 2250, 2822) & (2555, 7827, 9804) \\
        (11, 50, 63) & (871, 2610, 3268) & (1821, 7489, 9413) \\
        (59, 110, 135) & (2159, 3892, 4760) & (\numprint{22787}, \numprint{27649}, \numprint{31224}) \\
        (66, 159, 198) & (3699, 5384, 6395) & (\numprint{16394}, \numprint{21575}, \numprint{25029}) \\
        (15, 142, 179) & (965, 3030, 3797) & (\numprint{116547}, \numprint{121124}, \numprint{125379}) \\
        (473, 855, 1046) & (2050, 4290, 5305) & (2930, 9487, \numprint{11894}) \\
        (1634, 2470, 2954) & (2295, 5729, 7140) & (\numprint{35948}, \numprint{41579}, \numprint{45996})
    \end{tabular}
    \caption{All non-parametric solutions to the SoCS Problem with $c - a \leq \numprint{10048}$.}
    \label{tab:many-non-param-solutions}
\end{table}

We now focus on the second algorithm, the ``Pell Generator'' algorithm. This algorithm allows us to compute infinitely many solutions from any given solution. To calculate new solutions, we will solve a generalized Pell equation \begin{equation}
    u^2 - Av^2 = B,
\end{equation}
where $A$ and $B$ are given in terms of our initial solution.
In order for this Pell equation to have infinitely many solutions, we must ensure that $A$ is positive and not a square. These claims can be proven with the help of the following lemma, which states that solutions $(a,b,c)$ cannot be too ``unbalanced.'' That is, $b-a$ cannot be too large in comparison to $c-b$.
\begin{lemma}\label{L:SolutionsBalanced}
    Let $(a,b,c)$ be a solution to \eqref{E:PyramidalEquality} with $0 < a + 1 < b < c$. Then,
    \begin{equation}
        1 < \frac{b-a}{c-b} < 1 + 2^{1/3} + 2^{2/3} \approx 3.8473.
    \end{equation}
\end{lemma}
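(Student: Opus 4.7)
The strategy is to prove the two inequalities separately. For the lower bound, the plan is to use the exact factorization
\[P_n - P_m = \frac{(n-m)\bigl[2(m^2 + mn + n^2) + 3(m+n) + 1\bigr]}{6};\]
equating $P_b - P_a = P_c - P_b$ yields
\[\frac{b-a}{c-b} = \frac{2(b^2 + bc + c^2) + 3(b+c) + 1}{2(a^2 + ab + b^2) + 3(a+b) + 1},\]
and the numerator minus the denominator factors as $(c-a)\bigl(2(a+b+c) + 3\bigr)$, which is strictly positive.

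For the upper bound, I would first shift coordinates via $A = a + 1/2$, $B = b + 1/2$, $C = c + 1/2$, under which the equation $P_a + P_c = 2P_b$ becomes the clean identity $A^3 + C^3 - 2B^3 = (A+C-2B)/4$. Writing $M = A + C = a + c + 1$, $L = C - A = c - a$, and $\Delta = 2B - (A+C) = (b-a) - (c-b)$, and expressing $A, B, C$ in terms of $M, L, \Delta$, the equation becomes
\[\Delta\bigl(3M^2 + 3M\Delta + \Delta^2 - 1\bigr) = 3 M L^2.\]
Setting $r = (b-a)/(c-b)$, so that $\Delta = (r-1)(c-b)$ and $L = (r+1)(c-b)$, I view this as a quadratic in $M$:
\[g(M) := 3(r-1) M^2 - 12 r(c-b) M + (r-1)\bigl[(r-1)^2 (c-b)^2 - 1\bigr] = 0.\]
The hypothesis $a \geq 0$ translates to the constraint $M \geq (r+1)(c-b) + 1$.

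I then argue by contradiction. Set $\alpha := 1 + 2^{1/3} + 2^{2/3}$ and note that $\alpha(\sqrt[3]{2} - 1) = 1$, whence $(\alpha+1)^3 = 2\alpha^3$, i.e., $\alpha$ is the real root of $\tau^3 - 3\tau^2 - 3\tau - 1 = 0$. Assume for contradiction $r \geq \alpha$. A direct expansion yields
\[g\bigl((r+1)(c-b) + 1\bigr) = 4(r^3 - 3r^2 - 3r - 1)(c-b)^2 + 6(r^2 - 2r - 1)(c-b) + 2(r-1).\]
Because $\alpha > 1 + \sqrt{2}$, for $r \geq \alpha$ all three coefficients are non-negative and the constant is strictly positive, so this value is positive. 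A short direct check also gives $(r+1)(c-b) + 1 > 2r(c-b)/(r-1)$, the midpoint of the two roots of $g$. Since $g$ opens upward, positivity at a point lying above the midpoint of the roots places that point strictly beyond the larger root; hence no $M \geq (r+1)(c-b) + 1$ can satisfy $g(M) = 0$, contradicting the existence of the solution $(a,b,c)$.

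The main obstacle is the algebraic verification that the coefficient of $(c-b)^2$ in $g\bigl((r+1)(c-b)+1\bigr)$ simplifies to exactly $4(r^3 - 3r^2 - 3r - 1)$; this identity is what pins down $\alpha$ as the sharp bound, and it is not at all apparent from the original form of the equation. Everything else reduces to book-keeping.
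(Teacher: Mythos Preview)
Your proof is correct. The lower bound is handled the same way as in the paper: your expression $2(x^2+xy+y^2)+3(x+y)+1$ is exactly the paper's $f(x,y)=1+3x+2x^2+3y+2y^2+2xy$, and your direct computation of the difference $f(b,c)-f(a,b)=(c-a)(2(a+b+c)+3)$ is simply a more explicit version of the paper's monotonicity observation.

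For the upper bound, however, your route is genuinely different. The paper argues analytically: it bounds $c$ crudely by $c^3<b(b+1)(2b+1)$, then estimates
\[
\frac{\ell}{m}<\frac{f\bigl(b,\sqrt[3]{b(b+1)(2b+1)}\bigr)}{f(0,b)},
\]
asserts this function of $b$ is increasing, and takes its limit as $b\to\infty$ to obtain $1+2^{1/3}+2^{2/3}$. Your argument is purely algebraic: after the shift $n\mapsto n+\tfrac12$ you obtain the exact identity $\Delta(3M^2+3M\Delta+\Delta^2-1)=3ML^2$, treat it as a quadratic in $M$, and show that the constraint $M\ge (r+1)(c-b)+1$ (from $a\ge 0$) forces $g(M)>0$ once $r\ge\alpha$. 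The key computation $g\bigl((r+1)(c-b)+1\bigr)=4(r^3-3r^2-3r-1)(c-b)^2+6(r^2-2r-1)(c-b)+2(r-1)$ checks out, and with it the characterisation of $\alpha$ as the real root of $\tau^3-3\tau^2-3\tau-1=0$ (equivalently $(\alpha+1)^3=2\alpha^3$) drops out naturally. Your approach has the advantage of being completely elementary---no monotonicity or limit claims to justify---and it makes transparent \emph{why} the specific constant $1+2^{1/3}+2^{2/3}$ appears. The paper's approach is quicker to set up but leaves those analytic steps as exercises.
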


\begin{proof}
    Expanding \eqref{E:PyramidalEquality}, we see that \begin{equation}
        a(a + 1)(2a + 1) + c(c + 1)(2c + 1) = 2b(b + 1)(2b + 1).
    \end{equation}
    Rearranging, we have \begin{equation}
        (1 + 3a + 2a^2 + 3b + 2b^2 + 2ab)(b - a) = (1 + 3b + 2b^2 + 3c + 2c^2 + 2bc)(c - b).
    \end{equation}
    Let $\ell = b - a$ and $m = c - b$. Then \begin{equation}
        \frac{\ell}{m} = \frac{f(b,c)}{f(a,b)},
    \end{equation}
    where \begin{equation}
        f(x, y) = 1 + 3x + 2x^2 + 3y + 2y^2 + 2xy.
    \end{equation}
    If $x$ is fixed, then $f(x,y)$ is increasing as a function of $y$. Similarly, $f(x,y)$ is increasing in $x$ for fixed $y$. Recalling that $0 < a + 1 < b < c$, we have the lower bound \begin{equation}
        \frac{\ell}{m} = \frac{f(b,c)}{f(a,b)} > \frac{f(b, b)}{f(b, b)} = 1.
    \end{equation}

    To obtain an upper bound for $\ell/m$, we require an upper bound on $c$. Since $\ell > m$, we have the upper bound $c < 2b - a$. However, this bound is not sufficient for our needs. Instead, we note that \begin{equation}
        2c^3 < c(c + 1)(2c + 1) = 2b(b + 1)(2b + 1) - a(a + 1)(2a + 1) < 2b(b + 1)(2b + 1).
    \end{equation}
    Then, we bound $\ell/m$ as \begin{equation}
        \frac{\ell}{m} = \frac{f(b,c)}{f(a,b)} < \frac{f\lrp{b,\sqrt[3]{b(b + 1)(2b + 1)}}}{f(0, b)},
    \end{equation}
    where we are using that $a \geq 0$. The expression on the right-hand side is increasing for all $b > 0$, and the limit as $b \to \infty$ is $1 + 2^{1/3} + 2^{2/3}$. Thus, we have $\ell/m < 1 + 2^{1/3} + 2^{2/3}$.
\end{proof}

\begin{corollary}\label{C:APos}
    Let $(a,b,c)$ be a solution to \eqref{E:PyramidalEquality} with $0 < a + 1 < b < c$. Let \begin{align}
        A &= 3 (b - c)^2 \lrp{12 (b - a)^2 (c - b)^2 - (a-2b+c)^4}.
    \end{align}
    Then $A > 0$ and $A$ is not a square.
\end{corollary}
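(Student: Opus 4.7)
The plan is to begin by rewriting $A$ in terms of the side-differences $\ell := b-a$ and $m := c-b$. Then $(b-c)^2 = m^2$, $(b-a)(c-b) = \ell m$, and $a - 2b + c = m - \ell$, so
\[
A = 3 m^2 E, \qquad E := 12 \ell^2 m^2 - (\ell - m)^4,
\]
and a direct expansion yields the useful identity $E = (\ell + m)^4 - 2 \ell^4 - 2 m^4$, whence $E/m^4 = \varphi(t)$ for $t = \ell/m$ and $\varphi(t) := (1+t)^4 - 2t^4 - 2$. To prove $A > 0$, I would first analyze $\varphi$: via the substitution $u = t + 1/t$ the equation $\varphi(t) = 0$ reduces to $u^2 - 4u - 8 = 0$, giving exactly two positive real roots for $t$, namely $(1 + \sqrt 3) \pm \sqrt{3 + 2\sqrt 3}$ (numerically $\approx 0.19$ and $\approx 5.27$), between which $\varphi > 0$. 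Lemma~\ref{L:SolutionsBalanced} forces $t = \ell/m \in (1,\, 1 + 2^{1/3} + 2^{2/3}) \approx (1, 3.85)$, which lies strictly inside this positivity interval, so $E > 0$ and hence $A > 0$.

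For non-squareness, since $m^2$ is a perfect square, $A$ is a square if and only if $3E$ is. I would argue by contradiction: assume $3E = \sigma^2$ for some nonnegative integer $\sigma$. Using the homogeneity $E(d \ell', d m') = d^4 E(\ell', m')$, any common factor of $\ell$ and $m$ can be absorbed into $\sigma$, so we may assume $\gcd(\ell, m) = 1$. A direct calculation modulo $8$ shows $E \equiv 7 \pmod 8$ whenever $\ell$ and $m$ have opposite parities, which gives $\sigma^2 \equiv 3 \cdot 7 \equiv 5 \pmod 8$---impossible, since squares modulo $8$ lie in $\{0, 1, 4\}$. Coprimality rules out both $\ell, m$ even, so both must be odd. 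A second residue calculation gives $E \equiv -(\ell - m)^4 \pmod 3$; if $3 \nmid (\ell - m)$, then $3 \nmid E$, so the $3$-adic valuation of $3E$ equals $1$ (odd), contradicting squareness.

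The main obstacle is the residual case: $\gcd(\ell, m) = 1$, both $\ell, m$ odd, and $3 \mid (\ell - m)$ (hence $3$ divides neither $\ell$ nor $m$). Writing $\ell - m = 3s$ and $\sigma = 3 m \rho$, the equation $3E = \sigma^2$ reduces to
\[
(2 \ell m - \rho)(2 \ell m + \rho) = 27 s^4.
\]
Pure $3$-adic analysis of this factorization shows the two factors have valuations summing to an odd number, hence must be unequal, but does not by itself yield a contradiction, and standard congruence invariants of $3E$ remain consistent with squareness in this regime. I would combine the factorization with the SoCS relation
\[
(\ell - m)\bigl[2(\ell^2 + \ell m + m^2) + 6 b^2 + 6 b + 1\bigr] = 3(2b + 1)(\ell^2 + m^2),
\]
obtained by expanding \eqref{E:PyramidalEquality}, to pin down $b$ modulo a suitable power of $3$ and to show that the resulting constraints on $b$, $\ell$, $m$ cannot be simultaneously satisfied by integer values. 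Carrying out this interaction between the $3$-adic valuation imbalance and the SoCS cubic identity is where the real work lies.
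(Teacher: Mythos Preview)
Your argument for $A>0$ is correct and essentially coincides with the paper's: both rewrite $A=3m^2E$ with $E=12\ell^2m^2-(\ell-m)^4$, factor the quartic in $s=\ell/m$ via the substitution $u=s+1/s$, and invoke Lemma~\ref{L:SolutionsBalanced} to place $s$ strictly between the real roots.

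For non-squareness, however, your proposal is incomplete, and the residual case you isolate is not a loose end but the entire difficulty. Your congruence sieve (mod~$8$ for opposite parities, $3$-adic valuation when $3\nmid(\ell-m)$) is valid, but notice that the one known rational point on the quartic, namely $s=1$ (i.e.\ $\ell=m$), survives both filters: after reducing by $\gcd$, $\ell'=m'=1$ are both odd, coprime, and $3\mid(\ell'-m')=0$. So your residual class genuinely contains a solution of $3E=\sigma^2$, and no further local (congruence) condition can exclude it. What you must show is that $\ell=m$ is the \emph{only} such point, and that is a global statement about a genus-one curve. There is also a structural tension in your plan: you reduce to $\gcd(\ell,m)=1$ to make the congruence arguments clean, but the SoCS identity you propose to exploit holds for the original $(\ell,m)$ coming from $(a,b,c)$, not the reduced pair. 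Concretely, the solution $(66,159,198)$ has $(\ell,m)=(93,39)$ with $\gcd=3$; the reduced pair $(31,13)$ lands in your residual case but does not itself arise from any SoCS triple, so you cannot invoke the relation there. (Minor point: in your factorization you presumably mean $\sigma=3\rho$, not $\sigma=3m\rho$.)

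The paper takes a different route for non-squareness. Setting $t=\sqrt{A}/m^3$ and $s=\ell/m$, the condition ``$A$ is a square'' becomes the existence of a rational point on the quartic
\[
t^2=-3s^4+12s^3+18s^2+12s-3.
\]
An explicit birational change of variables sends this quartic to the elliptic curve $y^2z=x^3-27z^3$ (LMFDB label 36.a3), which has rank~$0$ and exactly two rational points, $(3\!:\!0\!:\!1)$ and the point at infinity. Pulling back, the only rational points on the quartic are $(s,t)=(1,\pm 6)$, corresponding to $\ell=m$; Lemma~\ref{L:SolutionsBalanced} then rules this out. Thus the paper uses the SoCS hypothesis only to secure $\ell\neq m$, and the heavy lifting is the Mordell--Weil computation, not any interaction with the cubic SoCS identity.
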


\begin{proof}
    Let $\ell = b - a$ and $m = c - b$.
    We can restate $A$ in terms of $\ell$ and $m$ as \begin{equation}\label{eq:A-def-l-m}
        A = 3 m^2 \lrp{12\ell^2 m^2 - (m - \ell)^4}.
    \end{equation}
    We note that $A > 0$ if and only if \begin{equation}
        12\ell^2 m^2 - (m - \ell)^4 > 0.
    \end{equation}
    Dividing through by $m^4$ and setting $s = \ell/m$, we study the quartic \begin{equation}
        g(s) = -s^4 + 4s^3 + 6s^2 + 4s - 1.
    \end{equation}
    For $s \neq 0$, we may rewrite $g(s)$ as \begin{equation}
        g(s) = s^2 \lrp{-\lrp{s + \frac{1}{s}}^2 + 4\lrp{s + \frac{1}{s}} + 8}.
    \end{equation}
    The equation $g(s) = 0$ can then be solved by two applications of the quadratic formula. The real roots are \begin{equation}
        s = 1 + \sqrt{3} - \sqrt{3 + 2\sqrt{3}} \approx 0.1896 \qqand s = 1 + \sqrt{3} + \sqrt{3 + 2\sqrt{3}} \approx 5.2745.
    \end{equation}
    For $s$ between these real roots, we have $g(s) > 0$. Since $s = \ell/m$ satisfies $1 < s < 1 + 2^{1/3} + 2^{2/3}$ by Lemma \ref{L:SolutionsBalanced}, we see that $s$ always lies within the range where $g(s) > 0$. Thus, $A > 0$.

    We now show that $A$ is not a square. We divide \eqref{eq:A-def-l-m} by $m^6$ 
    and set $s = \ell/m$. Let $t = \sqrt{A}/m^3$. Then if $A$ is a square, the quartic curve \begin{equation}\label{eq:quartic-curve}
        t^2 = -3s^4 + 12s^3 + 18s^2 + 12s - 3
    \end{equation}
    has a rational point $(s,t)$. We show that the only rational points on \eqref{eq:quartic-curve} are $(1, \pm 6)$. Since $s = 1$ corresponds to $\ell = m$, which we know does not occur by Lemma \ref{L:SolutionsBalanced}, we will conclude that $A$ is not a square.

    To find the rational points on \eqref{eq:quartic-curve}, we make a change of variables to turn the equation into an elliptic curve. Let \begin{equation}\label{eq:s-t-change-of-var}
        s = \frac{-3x - y + 9z}{3x - y - 9z}
        \qqand
        t = \frac{6(2x^3 - 9x^2 z - y^2 z + 27z^3)}{z(3x - y - 9z)^2}.
    \end{equation}
    The reverse change of variables is given by \begin{equation}\label{eq:x-y-z-change-of-var}
        \frac{x}{z} = \frac{3(s^2 + 4s + t + 1)}{(s - 1)^2}
        \qqand
        \frac{y}{z} = \frac{9(6s^2 + st + 6s + t)}{(s - 1)^3}.
    \end{equation}
    Here we consider $(s,t)$ as an affine point and $[x:y:z]$ as a projective point. This change of variables can be found by combining Proposition 1.2.1 of \cite{elliptic-curve-handbook} and the standard manipulations to put an elliptic curve into Weierstrass form. Under this change of variables, \eqref{eq:quartic-curve} becomes \begin{equation}\label{eq:elliptic-curve}
        \frac{144(x - 3z)^3}{z^2(3x - y - 9z)^4} (x^3 - 27z^3 - y^2 z) = 0.
    \end{equation}
    Via the L-functions and modular forms database \cite[\href{https://www.lmfdb.org/EllipticCurve/Q/36/a/3}{Elliptic Curve 36.a3}]{lmfdb}, we see that the elliptic curve $y^2 z = x^3 - 27z^3$ has precisely two rational points: $[3:0:1]$ and the point at infinity $[0:1:0]$.
    If $(s,t)$ is a rational point with $s \neq 1$ that lies on the quartic curve \eqref{eq:quartic-curve}, then \eqref{eq:x-y-z-change-of-var} gives a rational point $[x : y : z]$ that satisfies \eqref{eq:elliptic-curve}. Since $[3 : 0 : 1]$ and $[0 : 1 : 0]$ both satisfy $x - 3z = 0$, the rational point must necessarily have $x = 3z$. However, solving \[
        \frac{x}{z} = 3 = \frac{3(s^2 + 4s + t + 1)}{(s - 1)^2}
    \]
    with $s \neq 1$ yields $t = -6s$, which reduces \eqref{eq:quartic-curve} to the equation $0 = -3(s - 1)^4$. Thus there are no rational points on \eqref{eq:quartic-curve} satisfying $t = -6s$ and $s \neq 1$, so there cannot be a rational point $(s,t)$ with $s \neq 1$ lying on the quartic curve. We conclude that the only rational points on \eqref{eq:quartic-curve} are $(s,t) = (1, \pm 6)$, so $A$ is not a square.
\end{proof}

We now describe the ``Pell Generator'' algorithm, which allows one to compute infinitely many solutions from any given computed solution. We then apply this algorithm to prove Theorems~\ref{T:ExistenceOfSolutions} and \ref{T:EvenSolutions}.

\begin{proposition}\label{prop:Pell-generator}
    Let $(a,b,c) = (a_0, b_0, c_0)$ be a solution to \eqref{E:PyramidalEquality} with $0 < a + 1 < b < c$.
    Then there exists an infinite family $(a_n, b_n, c_n)$ of integer solutions to \eqref{E:PyramidalEquality},
    all of which lie in the plane \begin{equation}\label{eq:a-b-c-plane}
        (b-c)x + (c-a)y + (a-b)z = 0.
    \end{equation}
    Furthermore, $c_n - a_n \equiv c - a \pmod{2}$ for all $n$.
\end{proposition}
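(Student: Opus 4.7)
The plan is to intersect the plane $(b-c)x+(c-a)y+(a-b)z=0$ with the cubic surface defined by~\eqref{E:PyramidalEquality}, show that the resulting plane curve is a conic which reduces to a generalized Pell equation, and invoke Corollary~\ref{C:APos} to produce an infinite family of integer solutions.

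First I would parametrize the plane by $(x,y,z)=(sa_0+t,\,sb_0+t,\,sc_0+t)$ for $s,t\in\R$; this is valid because the plane passes through the origin, through $(a_0,b_0,c_0)$, and contains the diagonal $x=y=z$, while $(a_0,b_0,c_0)$ and $(1,1,1)$ are linearly independent. In this parametrization, the base solution is $(s,t)=(1,0)$ and the line of trivial solutions $a=b=c$ corresponds to $s=0$. Writing the SoCS equation as $g(x,y,z)=0$ with $g(x,y,z):=x(x+1)(2x+1)+z(z+1)(2z+1)-2y(y+1)(2y+1)$ and using that $g(t,t,t)\equiv 0$, we see that $g(sa_0+t,sb_0+t,sc_0+t)=s\cdot Q(s,t)$, where $Q\in\Z[s,t]$ is an explicit quadratic whose coefficients are polynomials in $a_0,b_0,c_0$. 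Nontrivial solutions in the plane thus correspond to integer points on the conic $Q(s,t)=0$, which already contains the rational point $(1,0)$.

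The main algebraic step is to complete the square in $s$ and to set $V:=2t+1$, transforming $Q(s,t)=0$ into a generalized Pell equation
\begin{equation}
U^2-DV^2=B,
\end{equation}
where $U$ is an integer linear combination of $s$, $t$, and a constant, while $D$ and $B$ are explicit integer polynomial expressions in $a_0,b_0,c_0$. A short calculation shows that $D$ differs from the quantity $A$ of Corollary~\ref{C:APos} by the positive square factor $(c_0-b_0)^2$, so after rescaling the Pell equation has $A$ as its coefficient. By Corollary~\ref{C:APos}, $A>0$ and $A$ is not a perfect square, so $u^2-Av^2=1$ has infinitely many integer solutions generated by a fundamental unit. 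Composing these units with the base solution of $u^2-Av^2=B$ coming from $(s,t)=(1,0)$ produces an infinite sequence of integer solutions to the generalized Pell equation.

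Reversing the change of variables yields infinitely many rational pairs $(s_n,t_n)$ and hence triples $(a_n,b_n,c_n)=(s_na_0+t_n,\,s_nb_0+t_n,\,s_nc_0+t_n)$ on the plane. To obtain \emph{integer} triples, one must check that $s_n,t_n\in\Z$, which amounts to a finite list of congruence conditions on $(U,V)$; since Pell orbits are periodic modulo any fixed integer and all these congruences hold at the base solution, they continue to hold along an infinite arithmetic subsequence of the orbit. The parity statement follows from the same device: since $c_n-a_n=s_n(c_0-a_0)$, when $c_0-a_0$ is odd we only need $s_n$ odd, which again holds on an infinite subsequence by periodicity modulo~$2$ together with the fact that the base value $s=1$ is odd. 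The principal obstacle will be the algebraic bookkeeping in the middle step --- in particular, verifying that the discriminant of the conic is (a square times) the $A$ of Corollary~\ref{C:APos} and that $B\neq 0$, so that the Pell equation genuinely yields an infinite family rather than a single point.
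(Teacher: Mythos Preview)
Your approach is essentially the paper's: intersect the plane with the cubic surface, obtain a conic, complete the square to a generalized Pell equation, and invoke Corollary~\ref{C:APos} to guarantee infinitely many solutions. The differences are in the bookkeeping and in the end-game.

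You parametrize the plane by $(s,t)\mapsto(sa_0+t,\,sb_0+t,\,sc_0+t)$, whereas the paper keeps the ambient coordinates $(x,y,z)$, eliminates $x$ via the plane equation, and completes the square in $z$. Both routes give conics related by an integral linear change of variables, so their discriminants differ by a square and Corollary~\ref{C:APos} applies in either case. (Your specific claim that the square factor is exactly $(c_0-b_0)^2$ should be checked; in the paper's normalization the factor $m^2=(c_0-b_0)^2$ is already absorbed into $A$ itself.)

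The more substantive difference is in integrality and parity. The paper shows that \emph{every} Pell-generated triple is integral, by writing $a_n,b_n,c_n$ explicitly in terms of $p_n,q_n$ and checking $2a_n\equiv 0\pmod 2$ directly from $p_n^2-Aq_n^2=1$; it then proves $p_n$ is odd for all $n$, whence $c_n-a_n\equiv c-a\pmod 2$ for all $n$. Your periodicity argument is softer: it only yields an infinite arithmetic subsequence of the Pell orbit on which $(s_n,t_n)$ are integers and $s_n$ is odd. That suffices for the proposition as stated, since you may reindex and call that subsequence your family. But the paper's explicit formulas are what get reused downstream in the proofs of Theorems~\ref{T:ExistenceOfSolutions} and~\ref{T:EvenSolutions}, so the extra work there is not wasted.
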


\begin{proof}
    We first note that $(a, b, c)$ lies on the plane \eqref{eq:a-b-c-plane}, and this plane is precisely the plane that passes through the points $(a, b, c)$ and $(1,1,1)$. We study the intersection of this plane and the surface \begin{equation}\label{eq:beginning-surface}
        x(x+1)(2x+1) + z(z+1)(2z+1) = 2y(y+1)(2y+1),
    \end{equation}
    which is the same surface as $P_x + P_z = 2P_y$.
    
    Suppose that $(x,y,z)$ is an integer solution to \eqref{eq:a-b-c-plane} and \eqref{eq:beginning-surface}. We make the change of variables $\ell = b - a$ and $m = c - b$ and note that $\ell + m = c - a$.
    Rearranging, Equation \eqref{eq:a-b-c-plane} becomes \begin{equation}\label{eq:plane-before-division}
        m (y - x) = \ell (z - y).
    \end{equation}
    Likewise, we can rearrange \eqref{eq:beginning-surface} to obtain \begin{equation}\label{eq:expanded-surface}
        (1 + 3x + 2x^2 + 3y + 2y^2 + 2xy) (y-x) = (1 + 3y + 2y^2 + 3z + 2z^2 + 2yz) (z-y).
    \end{equation}
    Since $a+1 < b < c$, we have $m = c - b > 0$ and $\ell = b - a > 0$. Thus, taking $x = y$ or $y = z$ forces $x = y = z$ in Equation~\eqref{eq:plane-before-division}. However, while $x = y = z$ corresponds to solutions of \eqref{E:PyramidalEquality}, these values are not solutions of \eqref{eq:original-statement-polygons}. We will therefore assume that $x$, $y$, and $z$ are distinct. Dividing \eqref{eq:expanded-surface} by \eqref{eq:plane-before-division} yields \begin{equation}
        \ell (1 + 3x + 2x^2 + 3y + 2y^2 + 2xy) = m (1 + 3y + 2y^2 + 3z + 2z^2 + 2yz).
    \end{equation}
    Solving for $x$ in \eqref{eq:plane-before-division} yields \begin{equation}\label{eq:x-in-terms-of-y-z}
        x = \frac{(\ell + m) y - \ell z}{m}.
    \end{equation}
    We can remove the $x$ dependence to obtain  \begin{align}\label{eq:full-y-z-equation}
        0 &= \ell m^2 - m^3 + (3\ell^2 m + 6\ell  m^2 - 3 m^3)y + (2\ell^3 + 6\ell^2 m + 6 \ell m^2 - 2m^3) y^2 \\
        &\qquad+ (-3\ell^2 m - 3m^3) z + (2\ell^3 - 2 m^3) z^2 + (-4\ell^3 - 6\ell^2 m - 2 m^3) yz.
    \end{align}
    
    We note that $m^3 - \ell^3 \neq 0$ because the only real solution is when $m = \ell$, which we know does not occur by Lemma \ref{L:SolutionsBalanced}.
    Completing the square and multiplying through by $-8(m^3 - \ell^3) \neq 0$, we~obtain \begin{align*}
        0 &= -4 m^2 (m - \ell) (m^3 - \ell^3) - 3 m^2 \lrp{12\ell^2 m^2 - (m - \ell)^4} \lrp{2y + 1}^2 \\
        &\qquad+ \left(4 (m^3 - \ell^3) z + (4\ell^3 + 6\ell^2 m + 2m^3) y + (3\ell^2 m + 3m^3) \right)^2.
    \end{align*}
    Make the change of variables \begin{align*}
        u &= 4 (m^3 - \ell^3) z + (4\ell^3 + 6\ell^2 m + 2m^3) y + (3\ell^2 m + 3m^3), \\
        v &= 2y + 1, \\
        A &= 3 m^2 \lrp{12\ell^2 m^2 - (m - \ell)^4}, \\
        B &= 4 m^2 (m - \ell) (m^3 - \ell^3).
    \end{align*}
    We emphasize that the definition of $A$ given above is the same as in Corollary \ref{C:APos}. With this change of variables, we obtain the generalized Pell equation \begin{equation}\label{eq:pell-to-solve}
        u^2 - Av^2 = B.
    \end{equation}

    The explicit change of variables given above allows us to relate solutions $(x,y,z)$ to \eqref{eq:a-b-c-plane} and \eqref{eq:beginning-surface} to solutions $(u,v)$ to the Pell equation \eqref{eq:pell-to-solve}. We can use our initial solution $(a,b,c)$ to get an initial solution $(u_0, v_0)$ to the Pell equation. Then, we can apply the standard method of taking powers of the fundamental solution to $p^2 - Aq^2 = 1$ to calculate more solutions to the Pell equation. By reversing the change of variables, we will obtain new solutions to \eqref{eq:a-b-c-plane} and \eqref{eq:beginning-surface}.

    We take $(x,y,z) = (a,b,c)$ in our change of variables to obtain values for $u_0$ and $v_0$.
    Recalling that $\ell = b - a$ and $m = c - b$, the above change of variable simplifies to \begin{align}
        u_0 &= (c-b) \lrp{4a^3 + 3a^2 - 6a^2 b - 6ab + 4b^3 + 6b^2 - 6bc^2 - 6bc + 4c^3 + 3c^2}, \\
        v_0 &= 2b + 1.
    \end{align}
    For a given new solution $(u_n, v_n)$ to the Pell equation, we can obtain a solution $(a_n, b_n, c_n)$ by solving \begin{equation}\label{E:un-vn-a-b-c}
    \begin{aligned}
        u_n &= 4 \lrp{(c-b)^3 - (b-a)^3} c_n + \lrp{4(b-a)^3 + 6(b-a)^2 (c-b) + 2(c-b)^3} b_n \\
        &\qquad+ \lrp{3(b-a)^2 (c-b) + 3(c-b)^3} \\
        v_n &= 2b_n + 1
    \end{aligned}
    \end{equation}
    for $b_n$ and $c_n$ and then taking \[
        a_n = \frac{(c - a)b_n + (a - b)c_n}{c - b}
    \]
    as in \eqref{eq:x-in-terms-of-y-z}.

    To obtain new solutions to the Pell equation, we solve \eqref{eq:pell-to-solve} as a generalized Pell equation.
    Let $(p,q)$ be the fundamental solution to \[
        p^2 - Aq^2 = 1.
    \]
    By Corollary \ref{C:APos}, we know that $A$ is positive and not a square, so we are guaranteed that such a solution exists.
    Then for any $n \in \Z$, $(u_n, v_n)$ is a solution to \eqref{eq:pell-to-solve}, where $u_n, v_n \in \Z$ are defined by \begin{align}\label{E:un-vn-Pell}
        u_n + v_n\sqrt{A} = \lrp{u_0 + v_0\sqrt{A}} \lrp{p + q\sqrt{A}}^n.
    \end{align}
    
    For each $n \in \Z$, let $p_n, q_n \in \Z$ be such that $p_n + q_n\sqrt{A} = (p + q\sqrt{A})^n$.
    Equating \eqref{E:un-vn-a-b-c} and \eqref{E:un-vn-Pell} and using the formula for $a_n$ given above, we can solve for $(a_n, b_n, c_n)$ in terms of $p_n$, $q_n$, and $(a,b,c)$. We have \begin{align}
        a_n &= -\frac{1}{2} + \frac{1}{2} \lrp{1 + 2a} p_n + \frac{1}{2} \alpha (b - c) q_n, \\
        b_n &= -\frac{1}{2} + \frac{1}{2}(1 + 2b) p_n + \frac{1}{2} \beta (b-c) q_n, \\
        c_n &= -\frac{1}{2} + \frac{1}{2}(1 + 2c) p_n + \frac{1}{2} \gamma (b-c) q_n,
    \end{align}
    where \begin{align}
        \alpha &= 2 a^3+3 a^2-12 a b^2-12 a b+6 a c^2+6 a c+8 b^3+6 b^2-4 c^3-3 c^2, \\
        \beta &= -4 a^3+6 a^2 b-3 a^2+6 a b-4 b^3-6 b^2+6 b c^2+6 b c-4 c^3-3 c^2, \\
        \gamma &= -4 a^3+6 a^2 c-3 a^2+6 a c+8 b^3-12 b^2 c+6 b^2-12 b c+2 c^3+3 c^2.
    \end{align}
    
    The new solution $(a_n, b_n, c_n)$ is composed of, a priori, half-integers.
    However, the fact that $p_n^2 - Aq_n^2 = 1$ in fact guarantees that the new solution is always integral. We compute \begin{align}
        2a_n &\equiv -1 + p_n + (b + c)(a + c) q_n 
        \equiv -1 + p_n^2 - A q_n^2 \equiv 0 \pmod{2}.
    \end{align}
    Here we are using that \begin{equation}
        A = -3 (b - c)^2 \lrp{(a-2b+c)^4 - 12 (b - a)^2 (c - b)^2} \equiv (b + c) (a + c) \pmod{2}.
    \end{equation}
    Similar calculations show that $2b_n, 2c_n \equiv 0 \pmod{2}$. Since each of these values is even, we must have that $a_n$, $b_n$, and $c_n$ are integers. Thus, $(a_n, b_n, c_n)$ is an integer solution for every $n \in \Z$.

    Given that $a_n$, $b_n$, and $c_n$ are integral, we now determine the parity of the solution, meaning the parity of $c_n - a_n$. We compute \begin{align}\label{eq:cn-an}
        c_n - a_n &= (c-a) \lrp{p_n + 3(b-c) (a^2 + a - 2b^2 - 2b + c^2 + c) q_n} \\
        &\equiv (c-a) p_n \pmod{2},
    \end{align}
    where we note that $a^2 + a \equiv c^2 + c \equiv 0 \pmod{2}$.

    We claim that $p \equiv 1 \pmod{2}$ always. For contradiction, suppose that $p \equiv 0 \pmod{2}$.
    From $p^2-Aq^2=1$, we must have $2\nmid q$ and thus $A\equiv 3\pmod{4}$ which contradicts the fact that
    \begin{equation}
        A \equiv (b-c)^2 (a-2b+c)^4 \equiv 0 \text{ or } 1 \pmod{4}.
    \end{equation}
    
    We further claim that $p_n \equiv 1 \pmod{2}$ for all $n$. We proceed by induction. We see that $p_0 = p$ satisfies this claim, and we suppose that for some $n \geq 1$, we have $p_{n-1} \equiv 1 \pmod{2}$. Recurrently, $p_n$ is given by
    \begin{align}
        p_n &= p_{n-1} p + A q_{n-1} q.
    \end{align}
    Since $p^2 - A q^2 = 1$ and $p \equiv 1 \pmod{2}$, we must have that $Aq \equiv 0 \pmod{2}$. Thus, $p_n \equiv p_{n-1} \equiv 1 \pmod{2}$, so we can conclude that $p_n \equiv 1 \pmod{2}$ for all $n \geq 0$. A similar argument shows that $p_n$ is odd for all negative $n$ as well. Inserting this result into Equation \eqref{eq:cn-an} shows that $c_n - a_n \equiv c - a \pmod{2}$ for all $n$. In other words, the parity of all solutions $(a_n, b_n, c_n)$ generated from $(a,b,c)$ is the same parity as the base solution.
\end{proof}

We now prove Theorem \ref{T:ExistenceOfSolutions} by applying the previous proposition. We choose our base solution $(a,b,c)$ to be a parameterized solution. The proposition allows $a_n$, $b_n$, and $c_n$ to be nonpositive, so to guarantee that the generated solutions $(a_n, b_n, c_n)$ are solutions to the SoCS Problem, we must verify that $0 < a_n + 1 < b_n < c_n$.

\begin{proof}[Proof of Theorem \ref{T:ExistenceOfSolutions}]
    We specialize Proposition \ref{prop:Pell-generator} to the parameterized solutions. For any odd $N \geq 3$, there is some $k \in \N$ such that $N = 2k + 1$. Then \[
        (a,b,c) = (2k^2 + k - 1, 2k^2 + 2k, 2k^2 + 3k)
    \]
    is a parameterized solution that satisfies $c - a = 2k + 1 = N$.

    For convenience, we explicitly state the fundamental solution $(p,q)$ to the Pell equation $p^2 - A q^2 = 1$.
    We can find the fundamental solution by analyzing the continued fraction of $\sqrt{A}$ (see Section~4.8 of \cite{continued-fractions} for a proof of this method). Here we have \[
        A = k^2 (36 k^4 + 72 k^3 + 36 k^2 - 3).
    \]
    The continued fraction for $\sqrt{A}$ is given by \[
        \sqrt{A} = \lrb{6k^3 + 6k^2 - 1; \obar{1, 4k + 2, 1, 12k^3 + 12k^2 - 2}},
    \]
    where $[a_0; a_1, a_2, \dots]$ denotes the continued fraction \[
        a_0 + \dfrac{1}{a_1 + \dfrac{1}{a_2 + \dots}}
    \]
    and the bar indicates that the values are repeated infinitely.
    Since the repetend has even length, the fundamental solution is given by the fourth convergent: \[
        \frac{p}{q} = \frac{24k^4 + 48k^3 + 24k^2 - 1}{4k + 4}.
    \]

    Applying Proposition \ref{prop:Pell-generator}, we can conclude that there are infinitely many integer solutions to \eqref{E:PyramidalEquality} that lie in the plane \begin{equation}\label{eq:k-plane}
        k x - (1 + 2k) y + (1 + k) z = 0.
    \end{equation}
    We note that we obtain a different plane for each $k \in \N$ and therefore for each odd $N \geq 3$. The solutions can be written explicitly as \begin{align*}
        a_n &= -\frac{1}{2} + \frac{1}{2} (4k^2 + 2k - 1) p_n + \frac{1}{2} k(24k^4 + 36k^3 + 6k^2 - 6k - 1) q_n, \\
        b_n &= -\frac{1}{2} + \frac{1}{2} (4k^2 + 4k + 1) p_n + \frac{1}{2} k(24k^4 + 48k^3 + 30k^2 + 6k - 1) q_n, \\
        c_n &= -\frac{1}{2} + \frac{1}{2} (4k^2 + 6k + 1) p_n + \frac{1}{2} k(24k^4 + 60k^3 + 42k^2 + 6k - 1) q_n,
    \end{align*}
    where $p_n, q_n$ are obtained by taking powers of the fundamental solution, as in Proposition \ref{prop:Pell-generator}.
    For $k \geq 1$, $a_n, b_n, c_n > 0$ whenever $p_n, q_n \geq 1$. We can easily obtain a sequence of $p_n, q_n$ satisfying this bound by only taking positive powers of $p + q \sqrt{A}$. Thus, $a_n, b_n, c_n > 0$ for all $n \geq 1$. We must also show that $a_n + 1 < b_n < c_n$. For $p_n, q_n, k \geq 1$, we have \begin{align}
        c_n - b_n &= k(p_n + 6k^2 (k + 1) q_n) > 0, \\
        b_n - (a_n + 1) &= (k+1) p_n + 6k^2 (k+1)^2 q_n - 1 > 0.
    \end{align}
    Thus, $(a_n, b_n, c_n)$ is a valid solution to the SoCS problem for all $n \geq 1$. Lastly, we have $c_n - a_n \equiv c - a \equiv N \equiv 1 \pmod{2}$, so $c_n - a_n$ is odd for all $n$.
\end{proof}

Two distinct odd $N, N' \geq 3$ yield distinct planes as in \eqref{eq:k-plane}, and these planes intersect only at the line $x = y = z$. By considering all odd $N \geq 3$, we obtain the following corollary.

\begin{corollary}
    There does not exist any finite collection of planes in $\R^3$ which contains all solutions to the Sum of Consecutive Squares Problem.
\end{corollary}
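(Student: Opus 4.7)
The plan is to argue by contradiction, leveraging Theorem~\ref{T:ExistenceOfSolutions} together with the distinctness of planes noted just before the corollary. Suppose toward a contradiction that there is a finite collection $P_1, \dots, P_r$ of planes in $\R^3$ whose union contains every solution to the SoCS Problem. For each odd $N \geq 3$, Theorem~\ref{T:ExistenceOfSolutions} produces a distinguished plane $\Pi_N$ (the plane \eqref{eq:k-plane}) containing an infinite family of SoCS solutions $(a_n, b_n, c_n)_{n \geq 1}$. Since these infinitely many points must be distributed among the $r$ planes, the pigeonhole principle forces some $P_j$ (depending on $N$) to contain infinitely many of them.

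The crux of the proof is the following intermediate claim: if a plane $P$ contains infinitely many of the solutions $(a_n, b_n, c_n)$ lying on $\Pi_N$, then $P = \Pi_N$. To establish this, I would return to the explicit formulas in the proof of Proposition~\ref{prop:Pell-generator}, which express $(a_n, b_n, c_n)$ as an affine function of $(p_n, q_n)$, namely $(a_n, b_n, c_n) = (-\tfrac12, -\tfrac12, -\tfrac12) + p_n \vec{u} + q_n \vec{v}$ with fixed vectors $\vec{u}, \vec{v}$ spanning the direction of $\Pi_N$. The pairs $(p_n, q_n)$ satisfy $p_n^2 - A q_n^2 = 1$ and hence trace a nondegenerate hyperbola in the $(p,q)$-plane; pushed forward by this affine map, the solutions trace a nondegenerate affine conic sitting inside $\Pi_N$. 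If $P \neq \Pi_N$, then $P \cap \Pi_N$ is at most a line, and any line meets an irreducible conic in at most two points. This would contradict the hypothesis that $P$ contains infinitely many of the $(a_n, b_n, c_n)$, so we must have $P = \Pi_N$.

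With the intermediate claim in hand, each of the planes $\Pi_N$ (for odd $N \geq 3$) coincides with one of the finitely many $P_1, \dots, P_r$. The paragraph preceding the corollary records that distinct odd $N, N' \geq 3$ give distinct planes $\Pi_N \neq \Pi_{N'}$ (indeed, $\Pi_N \cap \Pi_{N'}$ is the line $x = y = z$). Since there are infinitely many odd $N \geq 3$, this embeds an infinite set of distinct planes into the finite set $\{P_1, \dots, P_r\}$, the desired contradiction.

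The main obstacle is the intermediate claim: one has to rule out the scenario in which the infinitely many solutions produced by the Pell generator happen to be collinear within $\Pi_N$, because then a second plane containing that line could absorb all of them without coinciding with $\Pi_N$. The hyperbola observation $p_n^2 - Aq_n^2 = 1$ is exactly what prevents this, and it is the only step that requires more than a formal pigeonhole argument. Everything else — the distribution of the infinite family into finitely many planes and the distinctness of the $\Pi_N$ — follows immediately from Theorem~\ref{T:ExistenceOfSolutions} and the remark just above the corollary.
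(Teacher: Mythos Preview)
Your argument is correct and is exactly how one fleshes out the paper's one-line justification, which records only that the planes $\Pi_N$ of \eqref{eq:k-plane} are pairwise distinct and meet only along $x=y=z$. You have correctly isolated and dispatched the one point the paper leaves implicit --- ruling out collinearity of the Pell-generated family on a fixed $\Pi_N$ --- via the nondegenerate hyperbola $p_n^2-Aq_n^2=1$ and the rank-two affine parametrization $(p_n,q_n)\mapsto(a_n,b_n,c_n)$.
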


We now prove Theorem \ref{T:EvenSolutions}. This theorem is weaker than Theorem \ref{T:ExistenceOfSolutions} because we do not have an infinite collection of starting solutions to use, as we had with the parameterized solutions. Instead, we use a single solution to obtain the result.

\begin{proof}[Proof of Theorem~\ref{T:EvenSolutions}]
    We apply Proposition \ref{prop:Pell-generator} to an even solution. Let $(a,b,c) = (66, 159, 198)$. We note that $c - a = 132$ is even and $A = \numprint{681522798996}$. We again explicitly state $p$ and $q$ for ease of computations. The periodic part of the continued fraction for $\sqrt{A}$ has period length $212$. We compute that the $212$th convergent $\frac{p}{q}$ is given by
    \begin{align*}
        p &= \numprint{656255818034383997445391312835392606452606438915940344809477} \\
        &\phantom{{}={}} \numprint{432517816415686303848938358579958720601124769027506828449}, \\
        q &= \numprint{794937477236090382376510634959368770095239574754242412036} \\
        &\phantom{{}={}} \numprint{127917605604471287041191179858533774984393335615406940}.
    \end{align*}
    
    Applying Proposition \ref{prop:Pell-generator}, we see that the solution $(66, 159, 198)$ generates infinitely many integer solutions $(a_n, b_n, c_n)$. From the equations for $a_n$, $b_n$, and $c_n$ given in the proof of the proposition, we have \begin{align*}
        a_n &= -\frac{1}{2} + \frac{133}{2} p_n + \numprint{54898155}\, q_n, \\
        b_n &= -\frac{1}{2} + \frac{319}{2} p_n + \numprint{131674491}\, q_n, \\
        c_n &= -\frac{1}{2} + \frac{397}{2} p_n + \numprint{163871019}\, q_n.
    \end{align*}
    As in the proof of Theorem \ref{T:ExistenceOfSolutions}, $p_n, q_n \geq 1$ for $n \geq 1$, so $a_n, b_n, c_n > 0$ for these $n$. We also have \begin{align}
        c_n - b_n &= 39 p_n + \numprint{32196528} q_n > 0, \\
        b_n - (a_n + 1) &= 93 p_n + \numprint{76776336} q_n - 1 > 0,
    \end{align}
    for $p_n, q_n \geq 1$, so $(a_n, b_n, c_n)$ is a valid solution to the SoCS Problem for all $n \geq 1$.
    These solutions satisfy $c_n - a_n \equiv c - a \equiv 0 \pmod{2}$, so $c_n - a_n$ is even for each~$n$.
\end{proof}

\begin{remark}
    In the proof of Theorem \ref{T:EvenSolutions}, we generated infinitely many even solutions by starting with the solution $(66, 159, 198)$. This choice is not unique, and any of the even solutions given in Table \ref{tab:many-non-param-solutions} may be used to prove the theorem. Some of the solutions, however, have $A$ values with a continued fraction with a much longer period, making explicit calculations of the generated solutions $(a_n, b_n, c_n)$ more difficult.
\end{remark}

\section{Arithmetic Polygons}\label{S:ArithmeticPolygons}

We now discuss the relation between solutions to the Sum of Consecutive Squares Problem and arithmetic polygons. We begin by proving Theorem~\ref{T:SolutionsPolygonsCorrespondence}. We show how any arithmetic polygon corresponds to a solution to the SoCS Problem. We then provide a process which allows one to construct arithmetic polygons from a solution. Since every side in a given polygon has a unique length, we will refer to the sides by their length (e.g., ``side $a+1$'' refers to the side of length $a+1$).

\begin{proof}[Proof of Theorem~\ref{T:SolutionsPolygonsCorrespondence}]
    Suppose we have an arithmetic polygon with side lengths running from $a+1$ through $c$. Let $P_j$ be the vertex between sides $a+j$ and $a+j+1$. Side $a+2$ must meet side $a+1$ at a right angle, otherwise the perpendicular condition for arithmetic polygons would imply that the line connecting $P_2$ to $O$ would create a right triangle with sides $a+1$ and $a+2$, with side $a+1$ as the hypotenuse, which is impossible since the hypotenuse must always be the longest side. 
    
    Next, any side $a+j$ (for $3\leq j\leq c-a-1$) must be perpendicular to either the diagonal passing through $O$ and $P_{j-1}$ or the diagonal passing through $O$ and $P_j$. Let $j'$ be the smallest $j$ such that the diagonal perpendicular to side $a+j'$ passes through $P_{j'}$. Through repeated applications of the Pythagorean Theorem, we see that the diagonal from $O$ to $P_{j'-1}$ has length
    \begin{equation}\label{E:PolyToSolLength1}
        \sqrt{\sum_{i=1}^{j'-1} (a+i)^2}.
    \end{equation}

    Furthermore, the diagonal perpendicular to side $a+j'+1$ must pass through $P_{j'+1}$. If this diagonal passed through $P_{j'}$, then sides $a+j'$ and $a+j'+1$ would be collinear, causing $P_{j'}$ to be a degenerate vertex, which violates Property (\ref{I:ArithmeticPropertyDegenerate}) of Definition~\ref{D:ArithmeticPolygons}. Continuing inductively, it is true for all $j' \leq j\leq c-a-1$ that the diagonal perpendicular to side $a+j$ passes through $P_j$. Once again, repeated applications of the Pythagorean Theorem imply that the diagonal from $O$ to $P_{j'-1}$ has length
    \begin{equation}\label{E:PolyToSolLength2}
        \sqrt{\sum_{i=j'}^{c-a} (a+i)^2}.
    \end{equation}

    Equating \eqref{E:PolyToSolLength1} and \eqref{E:PolyToSolLength2}, squaring both sides, and writing $j'=b-a+1$, we find
    \begin{equation}
        (a+1)^2 + \cdots + b^2 = (b+1)^2 + \cdots + c^2.
    \end{equation}
    Therefore, given any arithmetic polygon, we have a solution to the SoCS Problem.

    We now show the opposite correspondence.
    Arithmetic polygons can be constructed from solutions $(a,b,c)$ to the SoCS Problem using the following general process. (See Figure~\ref{F:Construction} for a step-by-step visualization of this process for constructing a polygon from the $(9, 12, 14)$ solution.)

    \begin{figure}[t]
        \centering
        \begin{subfigure}{0.29\linewidth}
            \centering
            \includegraphics[width=\textwidth]{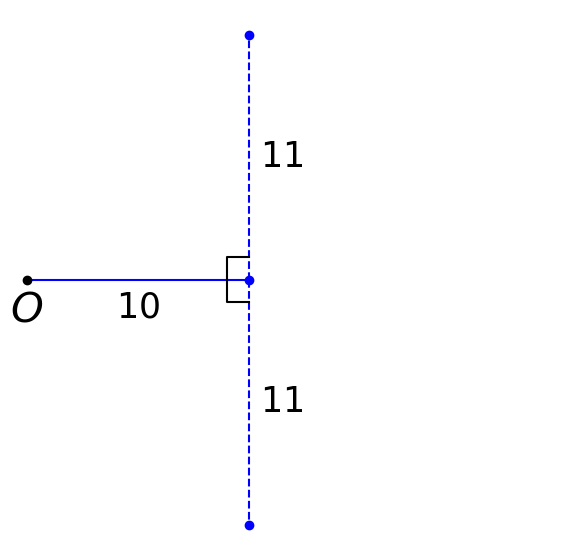}
            \caption{Side 10 sprouts out from $O$ to the east. We have two choices for side 11.}\label{F:Construction1}
        \end{subfigure}
        \hfill
        \begin{subfigure}{0.29\linewidth}
            \centering
            \includegraphics[width=\textwidth]{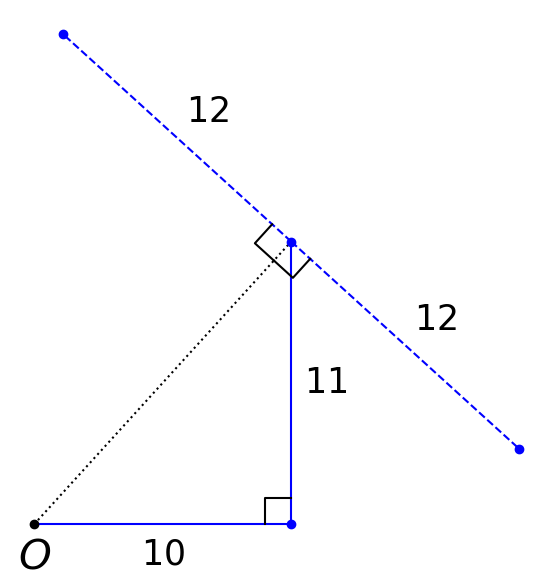}
            \caption{After making a choice for side 11, we have two possible choices for side 12.}
            \label{F:Construction2}
        \end{subfigure}
        \hfill
        \begin{subfigure}{0.29\linewidth}
            \centering
            \includegraphics[width=\textwidth]{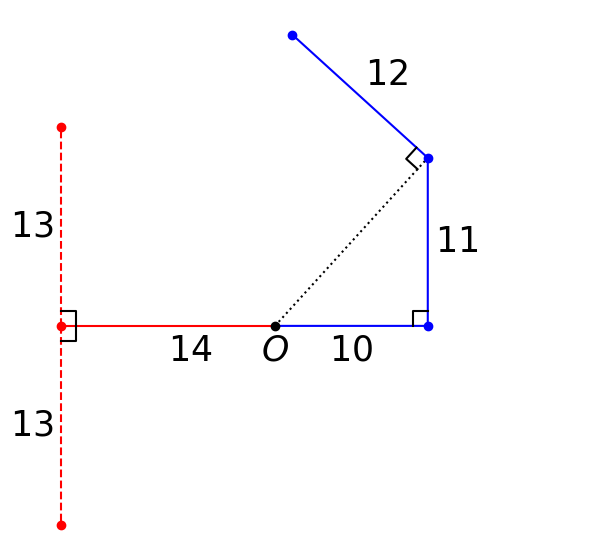}
            \caption{Side 14 sprouts out to the west of $O$. We have two choices for side 13.}\label{F:Construction3}
        \end{subfigure}
    
        \bigskip
        \begin{subfigure}{0.45\linewidth}
            \centering
            \includegraphics[width=0.67\textwidth]{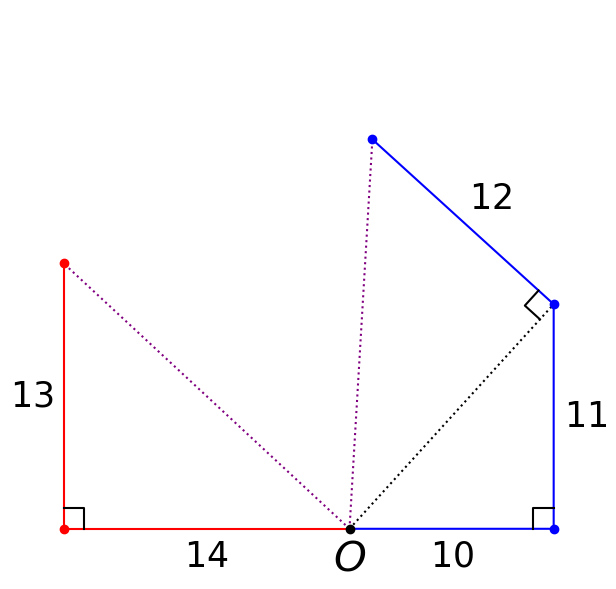}
            \caption{Both arms of the polygon completed.}\label{F:Construction4}
        \end{subfigure}
        \hfill
        \begin{subfigure}{0.45\linewidth}
            \centering
            \includegraphics[width=0.67\textwidth]{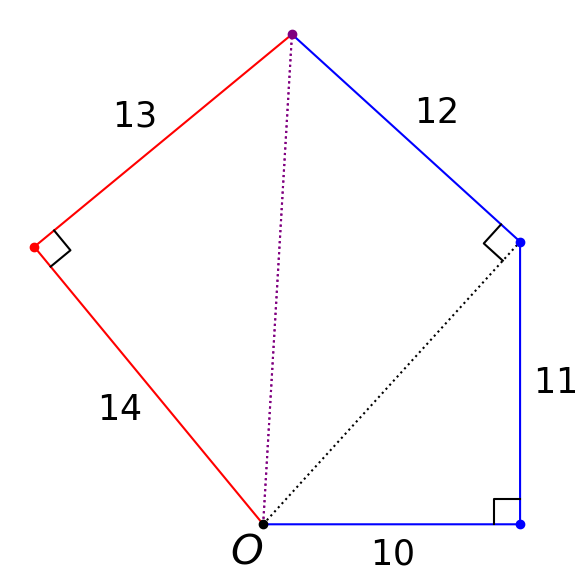}
            \caption{The second arm is rotated to meet the first arm and complete the polygon.}\label{F:Construction5}
        \end{subfigure}
        \caption{Constructing an arithmetic polygon from the $(9, 12, 14)$ solution.}\label{F:Construction}
    \end{figure}

    First, one starts off with the single vertex $O$. From here, side $a+1$ sprouts out to the east, and we let $P_1$ denote the newly created vertex at the end of this side. This side trivially satisfies Property (\ref{I:ArithmeticPropertyIntersection}) of Definition~\ref{D:ArithmeticPolygons} since $O$ is one of the vertices bounding this line. We now carry on the following inductive step to finish constructing the first ``arm'' of the polygon. 
    Suppose that sides $a+1$ through $a+j$ (with $j<b-a$) have been created, along with the respective vertices $P_1$ through $P_j$ (none of which are degenerate). Furthermore, suppose that the angle $\angle O P_{j-1} P_j$ is $\pi/2$ so that side $a+j$ satisfies Property~(\ref{I:ArithmeticPropertyIntersection}) of Definition~\ref{D:ArithmeticPolygons}, and suppose that the length of the diagonal from $O$ to $P_{j}$ is
    \begin{equation}
        \sqrt{(a+1)^2+\cdots+(a+j)^2}.
    \end{equation}
    
    We now draw the next side, having length $a+j+1$, to be perpendicular to this diagonal. There are two choices for this next side, as seen in Figure~\ref{F:Construction}, because we can go in either direction along the line passing through $P_{j}$ that is perpendicular to $\overline{O P_{j}}$. The side $a+j+1$ satisfies Property (\ref{I:ArithmeticPropertyIntersection}) of Definition~\ref{D:ArithmeticPolygons} by construction. The newly created vertex at the end of this side length is called $P_{j+1}$, and we note that the diagonal from $P_{j+1}$ to $O$ has length 
    \begin{equation}
        \sqrt{(a+1)^2+\cdots+(a+j)^2+(a+j+1)^2}.
    \end{equation}
    We also want to show that the vertex $P_j$ is not degenerate. If $j=1$, then the diagonal from $O$ to $P_1$ coincides exactly with side $a+1$, so by construction the angle $\angle OP_1P_2$ is exactly $\pi/2$. For $j>1$, we first note that by our induction hypothesis, the angle $\angle OP_{j-1}P_j$ is $\pi/2$. Furthermore, by construction the angle $\angle OP_jP_{j+1}$ is also $\pi/2$. If the angle $ \angle P_{j-1} P_j P_{j+1}$ were either $\pi$ or $0$, this would imply that the angle $\angle O P_j P_{j-1}$ would be $\pi/2$, and so the triangle $OP_{j-1}P_j$ would have two right angles. Since this is impossible, it follows that the vertex $P_j$ cannot be degenerate.
    One now continues this process until we reach vertex $P_{b-a}$.
    
    We now return to $O$, and draw a line sprouting out to the west with length $c$, and call the new vertex at the end of this side $Q_1$. We then repeat the exact same process as above, only reducing each side length by 1 rather than increasing, until we draw the side with length $b+1$, whose vertex we call $Q_{c-b}$. Since the length of the diagonal from $Q_{c-b}$ to $O$ is 
    \begin{equation}
        \sqrt{(b+1)^2+\cdots+c^2},
    \end{equation}
    and the length of the diagonal from $P_{b-a}$ to $O$ is 
    \begin{equation}
        \sqrt{(a+1)^2+\cdots+b^2},
    \end{equation}
    and $(a,b,c)$ is a solution to the SoCS Problem, the two diagonals have equal length. Therefore, we can rigidly rotate one of the arms sprouting from $O$ until $P_{b-a}$ and $Q_{c-b}$ coalesce into a single point, thus completing the polygon. 
    
    We now need to show that the vertices $O$ and $P_{b-a}$ ($=Q_{c-b}$) are not degenerate. We first show that the vertex $P_{b-a}$ is not degenerate. Consider the case where $c-b=1$. By Lemma~\ref{L:SolutionsBalanced}, $c-b=1$ implies that $c-a\leq 4$. The only solution satisfying this bound is $(2,4,5)$, which we already know has a corresponding arithmetic polygon, namely the $3$-$4$-$5$ triangle. Now, suppose that $c-b>1$. Then there is a vertex $Q_{c-b-1}$ such that the angle $\angle OQ_{c-b-1}P_{b-a}$ is $\pi/2$. Furthermore, the angle $\angle O P_{b-a-1}P_{b-a}$ is also $\pi/2$. If the angle at $P_{b-a}$ were $\pi$, then $OP_{b-a-1}Q_{c-b-1}$ would form a triangle with two right angles, which is impossible. If the angle at $P_{b-a}$ were $0$, then sides $b$ and $b+1$ lie on top of one another, so they are both line segments of the same line. The vertices $P_{b-a-1}$ and $Q_{c-b-1}$ both lie on this line, but they are not at the same point since they are at different distances from $P_{b-a}$. Therefore the angles $\angle O P_{b-a-1} P_{b-a}$ and $\angle O Q_{c-b-1} P_{b-a}$ must be different, however this is a contradiction since we know them both to be $\pi/2$. Therefore the angle at $P_{b-a}$ cannot be degenerate.

    Finally, we address the issue of degeneracy at $O$. If one follows the construction of the polygon up until this point, we begin with the angle at $O$ being $\pi$. Then when the two arms are constructed, we rotate one of the arms so that the vertices $P_{b-a}$ and $Q_{c-b}$ coalesce into a single point, thus changing the angle at $O$. If these two vertices happen to coincide without any rotation needed, then we take advantage of the freedom of choice in this process. For every side that is constructed, there is always a choice of two possible directions we can go in. Thus, we simply choose the ``other choice'' for side $b$ so that vertex $P_{b-a}$ will now lie at a different point from $Q_{c-b}$. Now some rotation is required for these two vertices to coalesce, changing the angle at $O$ away from $\pi$.
\end{proof}

\begin{remark}
    When constructing each side of the polygon (other than the initial sides of length $a$ and $c$), we have a choice whether to turn clockwise or counter-clockwise. Therefore, there are at most $2^{c-a-2}$ possible arithmetic polygons which can be constructed from any solution $(a,b,c)$. 
    Combining this upper bound with Theorem~\ref{T:BoundOnSolutionsForFixedN}, we see that for any integer $N \geq 3$, there are at most $2^{N-1}(N-1)$ possible arithmetic polygons with $N$ sides.
\end{remark}

We would like to construct polygons with the following two additional properties:

\begin{enumerate}
    \item Such polygons are convex.
    \item Such polygons are not self-intersecting (i.e., the polygon partitions the plane into a single interior and single exterior).
\end{enumerate}

We will see that the first property cannot be achieved for almost all solutions $(a,b,c)$. On the other hand, for any solution $(a,b,c)$ we can always construct an arithmetic polygon that is not self-intersecting using the following ``chainsaw process.'' See Figure~\ref{F:Chainsaws} for two examples of arithmetic polygons constructed using this process.

\begin{theorem}[Chainsaw process]
    Given any solution $(a,b,c)$ to the SoCS Problem, one can construct an arithmetic polygon with side lengths $a+1$ through $c$ which is not self-intersecting.
\end{theorem}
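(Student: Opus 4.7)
The plan rests on a tangency observation about the construction in the proof of Theorem~\ref{T:SolutionsPolygonsCorrespondence}. For the east arm, each side $a+j$ is perpendicular to $\overline{OV_{j-1}}$ at the vertex $V_{j-1}$, so it is tangent at $V_{j-1}$ to the circle centered at $O$ of radius $r_{j-1}=|OV_{j-1}|$. A direct computation
\[
    |V_{j-1}+t\mathbf{d}|^{2}=r_{j-1}^{2}+t^{2}\qquad(\mathbf{d}\perp \overline{OV_{j-1}},\ t\in[0,a+j])
\]
shows every point of the side other than $V_{j-1}$ has distance strictly greater than $r_{j-1}$ from $O$. Hence the side lies entirely in the annulus $\{r_{j-1}\le r\le r_j\}$, attaining $r=r_{j-1}$ only at $V_{j-1}$. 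The west arm enjoys the analogous tangency at the opposite endpoint of each side.

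From this tangency I will first deduce that each arm, constructed outward from $O$ and considered on its own, is automatically non-self-intersecting regardless of the binary directional choice made at each step: since the radii $r_0<r_1<\cdots<r_{b-a}$ are strictly monotonic along the east arm, any two non-adjacent sides lie in disjoint annuli, and any two adjacent sides meet only at their common vertex, which is the unique point at the shared boundary radius. The same argument handles the west arm. Consequently, the only possible source of self-intersection of the completed polygon is a side of the east arm crossing a side of the rotated west arm, and the chainsaw process must make its directional choices so as to preclude exactly this. My proposal is the rule: at each step, choose the direction that keeps the new vertex in the closed upper half-plane. A short case analysis on the quadrant of the current vertex $V_j$ verifies that this is always feasible---counter-clockwise perpendicular when $V_j$ is in the open first quadrant, clockwise perpendicular when in the open second quadrant, either choice when $V_j$ lies on the $y$-axis---and that the $y$-coordinate is strictly increasing along each arm after the first step. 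Both arms therefore lie in the closed upper half-plane, and the zigzag between first and second quadrants produces the sawtooth silhouette that gives the construction its name.

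The main obstacle will be to verify that, after rotating the west arm about $O$ to identify $Q_{c-b}$ with $V_{b-a}$, the rotated west arm and the east arm do not cross each other. Rotation preserves distances from $O$, so the annulus decomposition of each arm is preserved; any crossing must then occur at a radius belonging simultaneously to an east-arm annulus and a rotated-west-arm annulus, and at such a common radius the two tangent sides involved would have to cross transversally inside the overlap. My plan is to rule this out by combining the strict $y$-monotonicity along each arm with the half-plane confinement to establish enough angular separation between the two arms in every shared annulus. Making this angular control uniform in $(a,b,c)$---in particular for the very long arms arising from large solutions, where the cumulative angular sweep $\sum_j \arctan((a+j+1)/r_j)$ may be large---is the delicate step, and I expect it to require an induction on the number of sides together with the fact that the angular increments decrease with $j$.
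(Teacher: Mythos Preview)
Your annulus observation is exactly the engine the paper uses, and your argument that each arm taken alone is non-self-intersecting is correct and matches the paper's. The gap is in how you propose to separate the two arms from one another.

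You build both arms outward from $O$, confine them to the \emph{same} (upper) half-plane, and then rotate one arm about $O$ to close the polygon. At that point you acknowledge that the inter-arm separation becomes ``the delicate step'' and leave it to an unspecified angular-control induction. This is a real hole, not just a detail: after rotation the west arm's annuli $[\rho_{k-1},\rho_k]$ (with $\rho_k=\sqrt{c^2+\cdots+(c-k+1)^2}$) interleave with the east arm's annuli $[r_{j-1},r_j]$ in an essentially arbitrary way, so at a shared radius you have one east side and one west side with no a~priori angular relationship. Your $y$-monotonicity is a statement about the arms \emph{before} rotation and says nothing about the rotated west arm; the rotation angle is dictated by where $V_{b-a}$ and $Q_{c-b}$ happened to land and can be small, leaving the rotated west arm overlapping the east arm's region. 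I do not see how to complete this line without substantial new ideas.

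The paper avoids the whole difficulty by a change of viewpoint you should adopt: fix \emph{both} endpoints from the start---place $O$ at the origin and the far vertex $P$ at $(D,0)$ with $D=\sqrt{(a+1)^2+\cdots+b^2}=\sqrt{(b+1)^2+\cdots+c^2}$---and build each arm \emph{inward} from $P$ toward $O$, keeping the vertices of one arm strictly below the $x$-axis and those of the other strictly above. No rotation is needed (both arms terminate at $O$ automatically), and the half-plane separation kills inter-arm crossings for free. The feasibility of always choosing a vertex on the required side of the $x$-axis is a short quadrilateral-angle argument. The only genuine work left is nondegeneracy at $O$, which the paper handles by bounding the last few angular steps of the upper arm (each step is $\arccos$ of a ratio of consecutive radii) to force the penultimate vertex into the sector $|x|<y$, guaranteeing two valid choices for the final vertex; small cases $c-a\le 14$ are checked directly.
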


\begin{proof}
    We place the vertex $O$ at the origin, and place the farthest vertex from $O$, which we call $P$, at the point $(D, 0)$, where
    \begin{align*}
        D &= \sqrt{(a+1)^2 + \cdots + b^2} \\
        &= \sqrt{(b+1)^2 + \cdots + c^2}.
    \end{align*}
    
    We first give a brief overview of the construction process.
    We start by constructing the first ``arm'' of the polygon, consisting of sides $a+1$ through $b$. For each $j$ ranging from 1 to $b-a$, we draw a circle $\cC_j$ with its center at $O$ and radius equal to
    \begin{equation}
        \sqrt{\sum_{i=1}^{j}(a+i)^2}.
    \end{equation}
    Starting at $P$, each successive vertex of the polygon as one travels towards $O$ will be placed on successive circles moving inwards (so $P$ lies on the circle $\cC_{b-a}$, then traveling along side $b$ we arrive at the next vertex which lies on the circle $\cC_{b-a-1}$, then traveling along side $b-1$ we arrive at the next vertex which lies on the circle $\cC_{b-a-2}$, and so on and so forth until the vertex lying on the circle $\cC_1$ connects to $O$ via side $a+1$). Since the circles partition the plane into disjoint subsets, we are guaranteed that none of these sides will intersect with each other (other than at the vertices which lie on the circles) since they all lie in separate subsets of the plane. Finally, in order to ensure that these sides do not intersect with any side from the other arm of the polygon, we restrict every vertex in this arm to lie strictly below the $x$-axis. We will then construct the second arm in a similar manner, but with the restriction that all vertices lie above the $x$-axis. See Figure~\ref{F:Contructing_Chainsaw} for a visualization of this process.

    \begin{figure}[t]
        \centering
        \begin{subfigure}{0.45\linewidth}
            \centering
            \includegraphics[width=\textwidth]{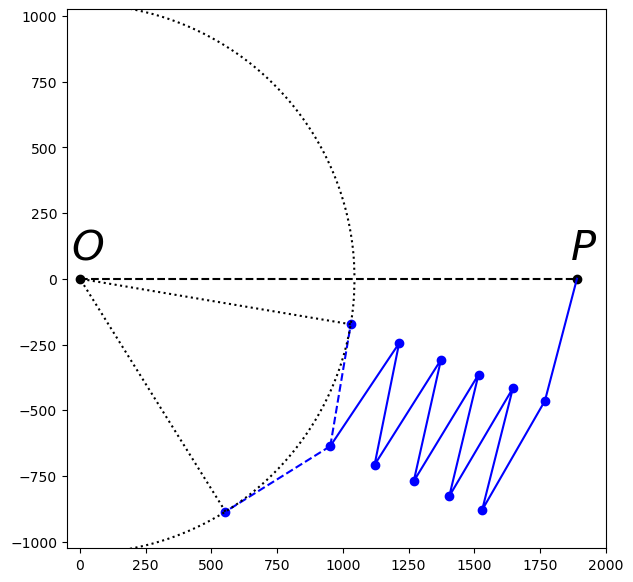}
            \caption{Constructing the lower arm of the $(464,480,495)$ chainsaw polygon. Here there are two possible locations where the next vertex could be placed.}\label{F:Chainsaw_Construction_1}
        \end{subfigure}
        \hfill
        \begin{subfigure}{0.45\linewidth}
            \centering
            \includegraphics[width=\textwidth]{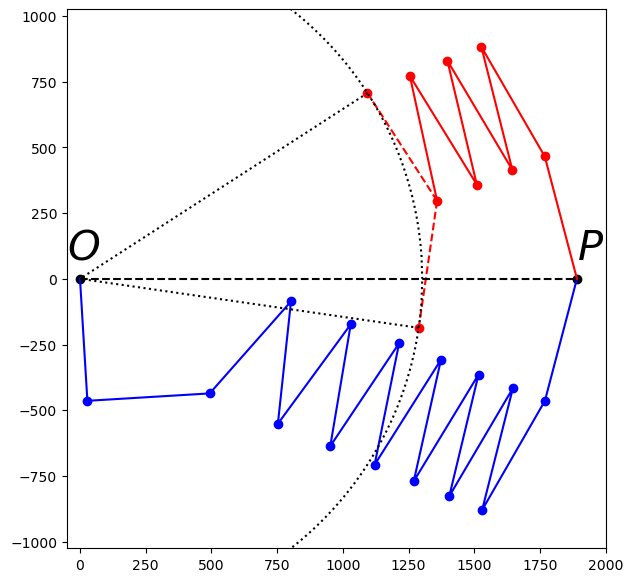}
            \caption{Constructing the upper arm of the $(464,480,495)$ chainsaw polygon. Here there is only one location that the vertex could be placed to stay above the $x$-axis.}\label{F:Chainsaw_Construction_2}
        \end{subfigure}
        \caption{Constructing an arithmetic polygon from the $(464, 480, 495)$ solution using the chainsaw process.}\label{F:Contructing_Chainsaw}
    \end{figure}

    We now prove that this process will always give an arithmetic polygon which is not self-intersecting. For the purpose of notation, let the previously undefined ``circle'' $\cC_0$ refer to the origin $O$. Suppose we have drawn the polygon up to the vertex $Q_{a+j}$, which lies below the $x$-axis and on the circle $\cC_j$. We consider the two points $Q_{a+j-1}^{(1)}$ and $Q_{a+j-1}^{(2)}$ which lie on $\cC_{j-1}$ and create a line segment with $Q_{a+j}$ which is tangent to $\cC_{j-1}$. Each of these line segments satisfy Property (\ref{I:ArithmeticPropertyIntersection}) of Definition~\ref{D:ArithmeticPolygons}. Furthermore, since (for $i\in\{1,2\}$) $Q_{a+j}Q_{a+j-1}^{(i)}O$ forms a right triangle, the line segment from $Q_{a+j}$ to $Q_{a+j-1}^{(i)}$ has length $a+j$. 

    The final thing to check is that at least one of these two points $Q_{a+j-1}^{(i)}$ lies strictly below the $x$-axis. Suppose both points lie on or above the $x$-axis. Since $Q_{a+j}$ lies below the $x$-axis, the quadrilateral $Q_{a+j}Q_{a+j-1}^{(1)}OQ_{a+j-1}^{(2)}$ will have an interior angle at $O$ which is at least $\pi$. We have a contradiction since the angles at $Q_{a+j-1}^{(i)}$ are both exactly $\pi/2$. Therefore, at least one of these points lies below the $x$-axis. If there is only one such point, then we let the next vertex in the polygon, which we denote $Q_{a+j-1}$, be that point. If both points lie below the $x$-axis, then we can choose which point we want to be our next vertex. The above argument assumes that $Q_{a+j}$ lies strictly below the $x$-axis. While $Q_{b-a}=P$ lies on the $x$-axis, it is clear that one of the tangent lines from $P$ to $\cC_{b-a-1}$ lies below the $x$-axis (while the other lies symmetrically above the $x$-axis). 

    The second arm of the polygon (consisting of sides with lengths ranging from $b+1$ through to $c$) is constructed in a similar manner, except this time with the restriction that each side and vertex lies above the $x$-axis to avoid any intersections with sides from the first arm. We construct circles centered at $O$ with radii equaling
    \begin{equation}
        \sqrt{\sum_{i=0}^{j} (c-i)^2},
    \end{equation}
    where $j$ ranges from $0$ to $c-b-1$, then proceed as we did with the first arm, starting with side $b+1$ and finishing with side $c$. See Figure \ref{F:Chainsaws} for two completed chainsaw polygons.

    \begin{figure}[t]
        \begin{subfigure}{0.45\linewidth}
            \centering
            \includegraphics[width=\textwidth]{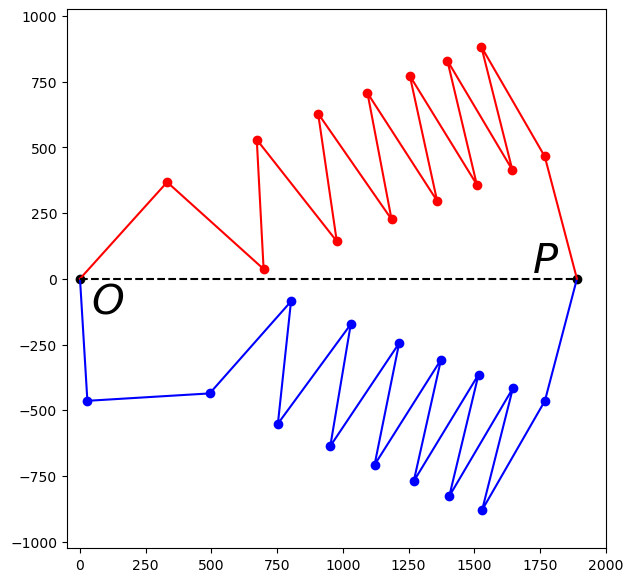}
            \caption{The $(464, 480, 495)$ chainsaw polygon.}\label{F:Chainsaw_464_480_495}
        \end{subfigure}
        \hfill
        \begin{subfigure}{0.45\linewidth}
            \centering
            \includegraphics[width=\textwidth]{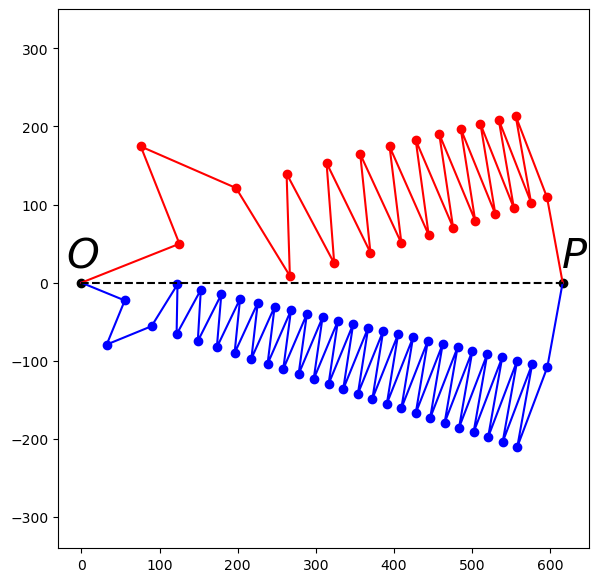}
            \caption{The $(59, 110, 135)$ chainsaw polygon.}\label{F:Chainsaw_59_110_135}
        \end{subfigure}
        \caption{Two examples of polygons constructed from the chainsaw process.}\label{F:Chainsaws}
    \end{figure}

    We now verify that none of the angles are degenerate. The angle at $P$ cannot be $\pi$ since the vertices on either side of $P$ both lie strictly inside the circle centered at the origin with radius $D$, while $P$ lies on this circle. Also, the angle at $P$ cannot be $0$ since the vertices on either side of $P$ lie on either side of the $x$-axis, while $P$ is exactly on the $x$-axis. For any vertex $Q_{a+j}$ created during the construction of the first arm, we  note that the line from the previous vertex $Q_{a+j+1}$ to $Q_{a+j}$ is tangent to the circle $\cC_j$, upon which the vertex $Q_{a+j}$ lies. If the angle at $Q_{a+j}$ were either $\pi$ or $0$, then the point $Q_{a+j-1}$ would lie outside $\cC_j$, which is a contradiction. Thus, the angle at $Q_{a+j}$ cannot be degenerate. Similarly, the vertices in the second arm also cannot be degenerate.
    
    Finally, we address the issue of degeneracy at $O$. One can computationally verify that, for all solutions $(a,b,c)$ with $c-a\leq14$, the polygon constructed using this process does not have a degenerate vertex at $O$. We now prove that the vertex $O$ is not degenerate whenever $c - a \geq 15$.
    
    Firstly, by Lemma~\ref{L:SolutionsBalanced}, if $c-b\leq3$, then
    \begin{equation}
        b-a \leq (c-b)(1+2^{1/3}+2^{2/3}) \leq 3(1+2^{1/3}+2^{2/3}) \approx 11.542.
    \end{equation}
    Therefore, $c-a\leq14$. Since $c-a$ and $c-b$ are both integers, the contrapositive of this is that if $c-a\geq 15$, then $c-b\geq 4$.

    We assume that the first arm (sides $a+1$ through $b$) of the polygon has been constructed and we focus on the construction of the second arm. To show that $O$ is not degenerate, we consider the size of the ``angular step'' that one takes when moving from one vertex to another while constructing the upper arm of the polygon. For $0 \leq j \leq c-b-1$, let $\cC'_j$ denote the circle centered at $O$ of radius \begin{equation}
        \sqrt{\sum_{i=0}^j (c-i)^2}.
    \end{equation}
    Let $Q'_{c-j}$ be the vertex of the chainsaw polygon that lies on this circle.
    The next vertex we create will be $Q'_{c-j+1}$ and will lie on the circle $\cC'_{j-1}$. We call $\angle Q'_{c-j} O Q'_{c-j+1}$ the ``angular step'' from $Q'_{c-j}$ to $Q'_{c-j+1}$. Recall that when constructing $Q'_{c-j+1}$, there are two possible points we could choose (although it is possible that one of these points could lie below the $x$-axis, in which case we are forced to choose the other). Each of these two possible points will have the same angular step from $Q'_{c-j}$, one in the clockwise direction and the other in the counter-clockwise direction. We make the following two claims:
    \begin{enumerate}
        \item For any integer $j\geq1$, the angular step from $Q'_{c-j}$ to $Q'_{c-j+1}$ is less than $\pi/4$. \label{I:DegeneracyClaimUpper}
        \item The angular steps from $Q'_{c-3}$ to $Q'_{c-2}$ and from $Q'_{c-2}$ to $Q'_{c-1}$ are both greater than $\pi/8$. \label{I:DegeneracyClaimLower}
    \end{enumerate}
    Consider the sector
    \begin{equation}
        \cS := \{(x,y) \in \R^2 : \abs{x} < y\}.
    \end{equation}
    In other words, $\cS$ contains all points whose angle against the positive $x$-axis is between $\pi/4$ and $3\pi/4$.
    If we can guarantee that $Q'_{c-1}$ lies within $\cS$, then making an angular step less than $\pi/4$ in either direction will still keep the next point above the $x$-axis. Therefore, if Claim~(\ref{I:DegeneracyClaimUpper}) is true, then we can guarantee that there are two possible valid choices for vertex $Q'_{c}$. Since it is impossible for both of those points to create a degenerate vertex at $O$, at least one of those choices will allow us to avoid degeneracy at $O$. 
    
    In order to ensure that the vertex $Q'_{c-1}$ is in $\cS$, we use both claims. Firstly, one constructs the second arm of the polygon as usual until reaching vertex $Q'_{c-3}$ (which must exist since $c-b\geq4$). If $Q'_{c-3}$ is in $\cS$, then, since the angular step to $Q'_{c-2}$ is less than $\pi/4$, one can guarantee by choosing an appropriate direction (either clockwise or counter-clockwise) that $Q'_{c-2}$ is also in $\cS$, and then similarly one can guarantee that $Q'_{c-1}$ also lies in $\cS$. If the point $Q'_{c-3}$ lies outside $\cS$ then we choose the next point such that we move towards $\cS$. If $Q'_{c-2}$ is now inside $\cS$, we can ensure that $Q'_{c-1}$ is also inside $\cS$ as described above. If $Q'_{c-2}$ is not inside $\cS$, then we again choose the next point to move in the direction of $\cS$. Since the angular steps from $Q'_{c-3}$ to $Q'_{c-2}$ and from $Q'_{c-2}$ to $Q'_{c-1}$ are both greater than $\pi/8$, their combined step is greater than $\pi/4$, ensuring that these two steps are sufficient to ensure that $Q'_{c-1}$ lies in $\cS$. For a visual representation of this procedure, see Figure~\ref{F:ChainsawDegeneracies}.

    \begin{figure}[t]
    \centering
        \begin{subfigure}{0.45\linewidth}
            \centering
            \includegraphics[width=\textwidth]{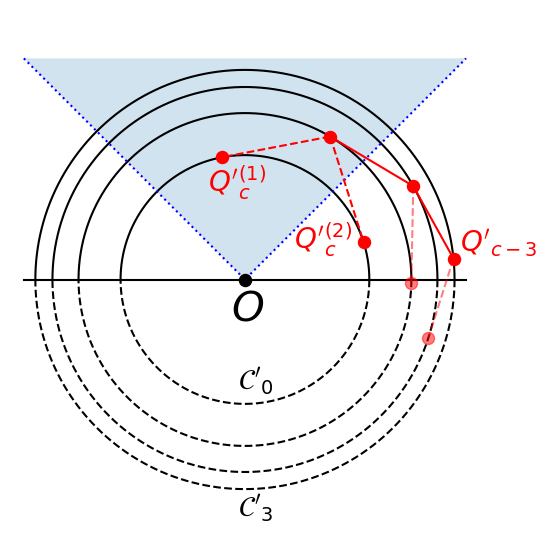}
            \caption{Here we start with $Q'_{c-3}$ outside of $\cS$, but we are able to enter the sector by taking two steps.}\label{F:ChainsawDegeneracy1}
        \end{subfigure}
        \hfill
        \begin{subfigure}{0.45\linewidth}
            \centering
            \includegraphics[width=\textwidth]{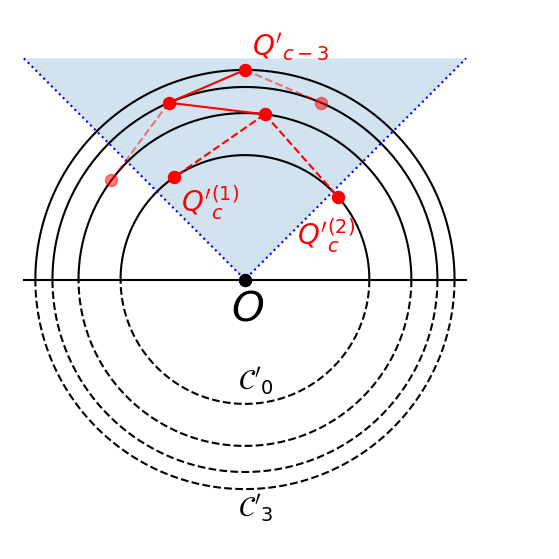}
            \caption{Here we start with $Q'_{c-3}$ inside $\cS$, and we are always able to remain in $\cS$ with each new vertex.}\label{F:ChainsawDegeneracy2}
        \end{subfigure}
        \caption{In both cases, we see that vertex $Q'_{c-1}$ lies within $\cS$, giving two possibilities for $Q'_c$ which are both above the $x$-axis.}\label{F:ChainsawDegeneracies}
    \end{figure}

    All that remains is to prove these two claims. To do this, we use the following geometric consideration. Suppose there are two circles with radii $x$ and $y$ (where $x<y$), which share a common center, which we call $O$. Let $Y$ be a point lying on the outer circle, and let $X$ be a point lying on the inner circle such that the line passing through $Y$ and $X$ is tangent to the inner circle. Let $\alpha$ be the angle $\angle YOX$. Then
    \begin{equation}
        \alpha = \arccos\left(\frac{x}{y}\right).
    \end{equation}
    Set $\eta = y/x > 1$. Then
    \begin{equation}
        \alpha = \arccos\left(\frac{1}{\eta}\right).
    \end{equation}
    We make the following three observations: (i) if $\eta=\sqrt{2}$, then $\alpha=\pi/4$; (ii) if $\eta=2/\sqrt{2 + \sqrt{2}}$, then $\alpha=\pi/8$; and (iii) since $\arccos$ is a decreasing function, $\alpha$ is increasing with respect to $\eta$.

    We begin by proving Claim~(\ref{I:DegeneracyClaimUpper}). Consider the angular step from $Q'_{c-j}$ to $Q'_{c-j+1}$, which we will denote by $\alpha$. We wish to show that
    \begin{equation}
        \sqrt{\frac{\sum_{i=0}^j (c-i)^2}{\sum_{i=0}^{j-1} (c-i)^2}} < \sqrt{2},
    \end{equation}
    which would imply that $\alpha < \pi/4$.
    We obtain this bound immediately by observing that for any $j \geq 1$, \begin{equation}
        (c-j)^2 < \sum_{i=0}^{j-1} (c-i)^2.
    \end{equation}

    We now prove Claim~(\ref{I:DegeneracyClaimLower}). For the step from $Q'_{c-3}$ to $Q'_{c-2}$, we wish to show that
    \begin{equation}
        \sqrt{\frac{c^2+(c-1)^2+(c-2)^2+(c-3)^2}{c^2+(c-1)^2+(c-2)^2}} > \frac{2}{\sqrt{2 + \sqrt{2}}},
    \end{equation}
    which would imply that $\alpha > \pi/8$.
    Firstly, since $c-b\geq4$, we know that $b-a\geq5$. Combining these bounds with $a\geq0$, we see that $c\geq9$. Secondly, the function on the left hand side of the above inequality is increasing for $c\geq 9$. Therefore,
    \begin{equation}
        \sqrt{\frac{c^2+(c-1)^2+(c-2)^2+(c-3)^2}{c^2+(c-1)^2+(c-2)^2}} \geq \sqrt{\frac{9^2+8^2+7^2+6^2}{9^2+8^2+7^2}} = \sqrt{\frac{230}{194}} > \frac{2}{\sqrt{2 + \sqrt{2}}}.
    \end{equation}

    Similarly, for the step from $Q'_{c-2}$ to $Q'_{c-1}$, one can see that
    \begin{equation}
        \sqrt{\frac{c^2+(c-1)^2+(c-2)^2}{c^2+(c-1)^2}} \geq \sqrt{\frac{194}{145}} > \frac{2}{\sqrt{2 + \sqrt{2}}},
    \end{equation}
    as desired.
\end{proof}

\begin{remark}
    In Figure \ref{F:Chainsaws}, whenever we had two valid choices for the next side in our construction, we always chose to go towards the $x$-axis. This aesthetic choice, while not necessary, gives the polygons a jagged appearance and is the inspiration for the name ``chainsaw.''
\end{remark}

We now address the issue of convexity. A polygon is convex if and only if all of the interior angles are less than $\pi$. For any polygon $\cP$, let $\mu(\cP)$ be the number of interior angles of $\cP$ which are greater than $\pi$. Therefore, any convex polygon $\cP$ will satisfy $\mu(\cP)=0$, and in general one can think of $\mu(\cP)$ as a measure of how far $\cP$ is from being convex. We see in the following theorem that not only are convex arithmetic polygons rare, but so are almost-convex arithmetic polygons.

\begin{theorem}\label{T:ConvexityBound}
    Let $\mu$ be the counting function defined above, and let $\cP$ be an arithmetic polygon with $N$ sides. For any $\nu$, $\mu(\cP) \leq \nu$ only if $N\leq8+3\pi^2(\nu+2)^2$.
\end{theorem}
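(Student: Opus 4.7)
The strategy is to bound the total exterior-angle mass of $\cP$ from above using the hypothesis $\mu(\cP) \leq \nu$ and from below using the geometry forced by the perpendicularity condition. Orient $\cP$ so that its signed exterior-angle sum equals $+2\pi$, and label the vertices $O = P_0, P_1, \dots, P_{N-1}$ around the boundary. As in the proof of Theorem~\ref{T:SolutionsPolygonsCorrespondence}, the polygon splits into two arms joined at the ``turning vertex'' $P_{b-a}$; for each inner arm-$1$ vertex $P_j$ (with $2 \leq j \leq b-a-1$) the perpendicularity condition gives $OP_{j-1} \perp P_{j-1}P_j$ and $OP_j \perp P_jP_{j+1}$. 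Writing $d_{j-1}^2 := (a+1)^2 + \cdots + (a+j-1)^2$, let $\theta_j := \arctan((a+j)/d_{j-1})$ be the angle at $O$ in the right triangle $OP_{j-1}P_j$. A short angle chase using these two right triangles at $P_j$ shows that the unsigned angle $\angle P_{j-1}P_jP_{j+1}$ equals either $\theta_j$ or $\pi - \theta_j$, so the interior angle at $P_j$ lies in $\{\theta_j,\ \pi-\theta_j,\ \pi+\theta_j,\ 2\pi-\theta_j\}$ and the exterior magnitude there is at least $\theta_j$. At $P_1$ and $P_{N-1}$ the two incident sides are perpendicular (one lies along $OP_1$, resp.\ $OP_{N-1}$), so the exterior magnitude equals $\pi/2$.

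Let $S_+$ and $S_-$ denote the sums of exterior magnitudes at convex and reflex vertices, respectively. The identity $S_+ - S_- = 2\pi$, together with $S_- \leq \mu(\cP)\pi \leq \nu\pi$ (since each reflex exterior magnitude is strictly less than $\pi$), gives
\begin{equation*}
    S_+ + S_- = 2S_- + 2\pi \leq 2(\nu+1)\pi.
\end{equation*}
On the other hand, dropping the nonnegative contributions at $P_{b-a}$ and $O$,
\begin{equation*}
    S_+ + S_- \geq \pi + \sum_{j=2}^{b-a-1} \theta_j.
\end{equation*}

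To lower-bound $\theta_j$, the inequality $\arctan(x) \geq x/\sqrt{1+x^2}$ gives $\theta_j \geq (a+j)/d_j$, and the bound $d_j \leq (a+j)\sqrt{j}$ (since every side up to the $j$-th has length at most $a+j$) yields $\theta_j \geq 1/\sqrt{j}$. An integral comparison then produces $\sum_{j=2}^{b-a-1} 1/\sqrt{j} \geq 2\sqrt{b-a} - 2\sqrt{2}$. By Lemma~\ref{L:SolutionsBalanced}, $b-a > c-b$, hence $b-a \geq (N+1)/2$, and combining everything yields
\begin{equation*}
    \pi + \sqrt{2N} - 2\sqrt{2} \leq 2(\nu+1)\pi,
\end{equation*}
so $\sqrt{N} \leq (2\nu+1)\pi/\sqrt{2} + 2$.

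Squaring gives $N \leq (2\nu+1)^2\pi^2/2 + 2\sqrt{2}(2\nu+1)\pi + 4$. Using $(2\nu+1)^2 \leq 4(\nu+2)^2$ bounds the first term by $2\pi^2(\nu+2)^2$, and the easy estimate $4\sqrt{2}\pi(\nu+2) \leq \pi^2(\nu+2)^2$ (valid for $\nu \geq 0$ since $\pi(\nu+2) \geq 2\pi > 4\sqrt{2}$) bounds the middle term by $\pi^2(\nu+2)^2$. Adding these together with $4 \leq 8$ delivers $N \leq 3\pi^2(\nu+2)^2 + 8$. The main technical step is the angle chase verifying $\eta \geq \theta_j$ at each interior vertex; after that, everything reduces to calculus and the integral test.
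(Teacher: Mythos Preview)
Your argument is correct and follows essentially the same strategy as the paper's: both show that the right-triangle structure forces the angle at each inner vertex $P_j$ to differ from $\pi$ by at least the angle $\theta_j=\alpha_j$ at $O$ in the triangle $OP_{j-1}P_j$, then play this off against the angle-sum identity (you phrase it as $S_+-S_-=2\pi$, the paper as $\sum\beta_\ell=(N-2)\pi$; these are equivalent). The bookkeeping differs in minor ways---you use the sharper bound $\theta_j\ge 1/\sqrt{j}$ with an integral comparison and sum only over the longer arm, while the paper uses the uniform bound $\alpha_\ell\ge 1/\sqrt{3N}$ over both arms---so your intermediate inequality $\sqrt{N}\le(2\nu+1)\pi/\sqrt{2}+2$ is in fact considerably stronger than the stated conclusion, but the core idea is the same.
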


\begin{proof}
    Suppose we have an arithmetic polygon with $N$ sides whose smallest side has length $a+1$. From Theorem \ref{T:SolutionsPolygonsCorrespondence}, this polygon corresponds to a solution to the SoCS problem, and we will denote this solution as $(a,b,c)$. We note that $N = c - a$. For any $1<\ell<c-a$, consider the right triangle constructed from $O$ and side $a+\ell$. The hypotenuse of this triangle will have length $u$, where
    \begin{align*}
        u^2 &\leq (a+1)^2+\cdots + (a+\ell)^2 \\
        &= a^2\ell + a\ell(\ell+1) + \frac{1}{6}\ell(\ell+1)(2\ell+1) \\
        &\leq a^2\ell + 2a\ell^2 + \ell^3 \\
        &\leq 3(a^2\ell + \ell^3).
    \end{align*}
    Hence, $u\leq\sqrt{3\ell}(a+\ell)$. Let $\alpha_{\ell}$ be the angle of this triangle at $O$. Then,
    \begin{equation*}
        \alpha_\ell \geq \sin\alpha_\ell
        = \frac{a+\ell}{u}
        \geq \frac{1}{\sqrt{3\ell}}.
    \end{equation*}

    We now turn our attention to the interior angles of the polygon. For $1\leq\ell<c-a$, let $\beta_{\ell}$ be the interior angle between sides $a+\ell$ and $a+\ell+1$. The angle at $O$ shall be denoted $\beta_0$.

    If $1<\ell<b-a$, then side $a+\ell+1$ is perpendicular to the hypotenuse of the triangle constructed from $O$ and the side $a+\ell$. Therefore, if $\beta_{\ell} < \pi$, then either $\beta_{\ell} = \pi - \alpha_{\ell}$ (if the interior of the triangle is also the interior of the polygon) or $\beta_{\ell} = \alpha_{\ell}$ (if the interior of the triangle is the exterior of the polygon). See Figure \ref{F:Convexity_Demos}. Since $\alpha_{\ell} < \pi/2$, we have $\alpha_{\ell} < \pi - \alpha_{\ell}$, so in either case,
    \begin{equation}
        \beta_{\ell} \leq \pi - \alpha_{\ell} \leq \pi - \frac{1}{\sqrt{3\ell}} \leq \pi - \frac{1}{\sqrt{3N}}.
    \end{equation}
    On the other hand, if $\beta_{\ell} > \pi$, then either $\beta_{\ell} = 2\pi - \alpha_{\ell}$ or $\beta_{\ell} = \pi + \alpha_{\ell}$. Similarly, we see that in either case,
    \begin{equation}
        \beta_{\ell} \leq 2\pi - \alpha_{\ell} \leq 2\pi - \frac{1}{\sqrt{3\ell}} \leq 2\pi - \frac{1}{\sqrt{3N}}.
    \end{equation}

    \begin{figure}[t]
        \centering
        \begin{subfigure}{0.45\linewidth}
            \includegraphics[width=\textwidth]{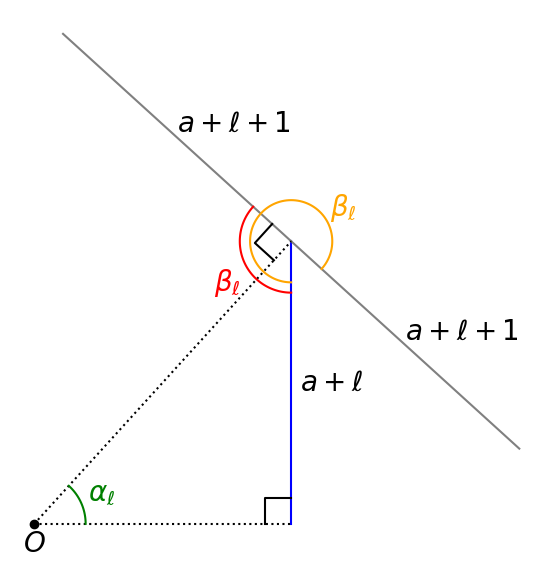}
            \caption{In this case, the interior of the polygon is also the interior of the triangle. If $\beta_{\ell} < \pi$, then $\beta_{\ell} = \pi - \alpha_{\ell}$, whereas if $\beta_{\ell} > \pi$, then $\beta_{\ell} = 2\pi - \alpha$.}\label{F:Convexity_Demo_1}
        \end{subfigure}
        \hfill
        \begin{subfigure}{0.45\linewidth}
            \includegraphics[width=\textwidth]{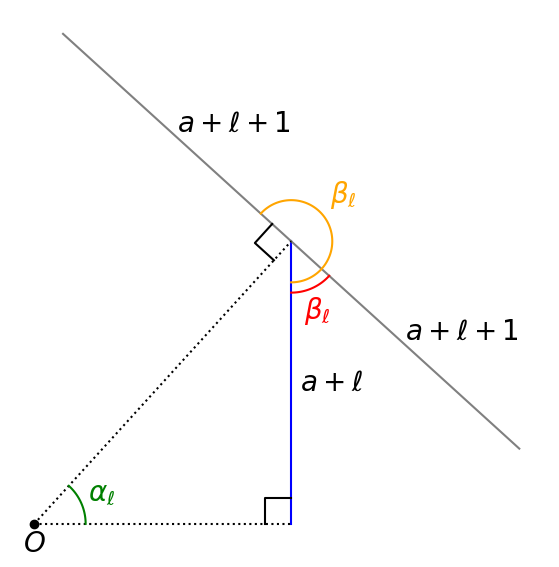}
            \caption{In this case, the interior of the polygon is the exterior of the triangle. If $\beta_{\ell} < \pi$, then $\beta_{\ell} = \alpha_{\ell}$, whereas if $\beta_{\ell} > \pi$, then $\beta_{\ell} = \pi + \alpha$.}\label{F:Convexity_Demo_2}
        \end{subfigure}
        \caption{Four possible values for $\beta_{\ell}$.}\label{F:Convexity_Demos}
    \end{figure}

    If $b-a+1<\ell<c-a$, the side of length $a+\ell-1$ is perpendicular to the hypotenuse of the triangle constructed from $O$ and the side $a+\ell$. So, similar to above, if $\beta_{\ell-1} < \pi$, then
    \begin{equation}
        \beta_{\ell-1} \leq \pi - \alpha_{\ell} \leq \pi - \frac{1}{\sqrt{3\ell}} \leq \pi - \frac{1}{\sqrt{3N}},
    \end{equation}
    whereas, if $\beta_{\ell} > \pi$, then
    \begin{equation}
        \beta_{\ell-1} \leq 2\pi - \alpha_{\ell} \leq 2\pi - \frac{1}{\sqrt{3\ell}} \leq 2\pi - \frac{1}{\sqrt{3N}}.
    \end{equation}
    So, as long as $\ell\not\in\{0, 1, b-a, c-a-1\}$, we have
    \begin{equation}
        \beta_{\ell} \leq \pi - \frac{1}{\sqrt{3N}} + \begin{cases}
            0 & \text{if } \beta_{\ell} < \pi, \\
            \pi & \text{if } \beta_{\ell} > \pi.
        \end{cases}
    \end{equation}
    Moreover, if $\ell \in \{0, 1, b-a, c-a-1\}$, then trivially
    \begin{equation}
        \beta_{\ell} \leq \pi + \begin{cases}
            0 & \text{if } \beta_{\ell} < \pi, \\
            \pi & \text{if } \beta_{\ell} > \pi.
        \end{cases}
    \end{equation}
    Therefore,
    \begin{equation}
        \pi(N-2) \leq N\lrp{\pi - \frac{1}{\sqrt{3N}}} + \frac{4}{\sqrt{3N}} + \mu(\cP)\pi,
    \end{equation}
    where the left hand side is the sum of all of the interior angles.
    Rearranging, we find
    \begin{equation}\label{eq:mu-ineq}
        \mu(\cP) \geq \frac{\sqrt{N}}{\pi\sqrt{3}}-\frac{4}{\pi\sqrt{3N}}-2.
    \end{equation}

    Suppose that $\nu$ is some integer such that 
    \begin{equation}\label{eq:n-k-ineq}
        N > 8 + 3\pi^2(\nu+2)^2.
    \end{equation}
    We must show that $\mu(\cP) > \nu$. Since $\mu(\cP) \geq 0$ always, the inequality follows trivially if $\nu < 0$. We may therefore assume that $\nu \geq 0$.
    We weaken the inequality by adding a negative number, giving us \begin{equation}
        N > 8 + 3\pi^2(\nu+2)^2 > 8 - \frac{16}{N} + 3\pi^2(\nu+2)^2.
    \end{equation}
    Rearranging, we have \begin{equation}
        N - 8 + \frac{16}{N} > 3\pi^2(\nu+2)^2.
    \end{equation}
    Since $N$ is positive, the left-hand side can be realized as a perfect square: \begin{equation}
        \lrp{\sqrt{N} - \frac{4}{\sqrt{N}}}^2 > 3\pi^2(\nu+2)^2.
    \end{equation}
    From \eqref{eq:n-k-ineq}, we have $N > 126$ since $\nu \geq 0$. The expressions within the squares on both sides of the inequality are therefore positive, so we must have \begin{equation}
        \sqrt{N} - \frac{4}{\sqrt{N}} > \pi\sqrt{3} (\nu+2).
    \end{equation}
    Solving for $\nu$, we see that \begin{equation}
        \frac{\sqrt{N}}{\pi\sqrt{3}}-\frac{4}{\pi\sqrt{3N}}-2 > \nu.
    \end{equation}
    Combining this inequality with \eqref{eq:mu-ineq} yields the desired result that $\mu(\cP) > \nu$.
\end{proof}

We have discussed how one can find arithmetic polygons from a solution $(a,b,c)$ and that one can always find an arithmetic polygon that is not self-intersecting. However, only a finite number of arithmetic polygons are convex. We now prove Theorem~\ref{T:Only2ConvexPolygons}, stating that there are only two distinct (up to rigid transformations) convex arithmetic polygons.

\begin{proof}[Proof of Theorem~\ref{T:Only2ConvexPolygons}]
    We make the following two observations:
    \begin{enumerate}
        \item An arithmetic polygon can only be convex if it has at most 126 sides.
        \item While any solution $(a,b,c)$ can produce many distinct arithmetic polygons, only at most one (up to rigid transformations) can be convex. Furthermore, we can give a description of how it is constructed.
    \end{enumerate}
    The first observation comes from applying Theorem~\ref{T:ConvexityBound} with $\nu=0$. By applying Theorem~\ref{T:BoundOnSolutionsForFixedN} and summing over $N$ from 3 to 126, we see that there are at most 
    \begin{equation}
        \sum_{N=3}^{126} 2(N-1) = 2\sum_{M=2}^{125} M = \numprint{15748}
    \end{equation}
    possible solutions which could produce a convex arithmetic polygon. The proof of Theorem~\ref{T:BoundOnSolutionsForFixedN} provides us with an algorithm for finding every solution with $N \leq 126$. Applying this algorithm, we find that there are only 67 solutions which could produce a convex arithmetic polygon. These solutions are all the parameterized solutions $(2k^2 + k - 1, 2k^2 + 2k, 2k^2 + 3k)$ for $k \leq 62$ and the following non-parameterized solutions: 
    \begin{align}
        (17, 34, 42), & && (3, 38, 48), & (11, 50, 63), & && (59, 110, 135), & (66, 159, 198).
    \end{align}
    
    To give a bound on the number of convex polygons, we require the second observation. To see that there is only one convex polygon for each solution, we consider the algorithm described for constructing arithmetic polygons given in the second half of the proof of Theorem~\ref{T:SolutionsPolygonsCorrespondence}.

    When constructing side $a+2$, we have the choice of two possible directions in which we can move. Once we make this choice, the choices for the remaining sides are determined. For example, in Figure~\ref{F:Construction2}, which shows a step in constructing a polygon from the $(9, 12, 14)$ solution, we see that there are two possibilities for side $13$. However, only one of them gives an interior angle less than $\pi$, while the other gives an interior angle greater than $\pi$. In other words, once we have a defined interior of our polygon, every new side created must always turn inwards towards the interior to create an angle smaller than $\pi$. Therefore, for a given solution, there are only two polygons that can be constructed which could possibly be convex, one for each choice of side $a+2$. However, these polygons are reflections of one another, so we are left with only one distinct polygon which could be convex.

    While it is necessary for a convex arithmetic polygon to have been constructed in this way, it is certainly not sufficient. For example, when rotating one arm to meet the other as is done in the proof of Theorem \ref{T:SolutionsPolygonsCorrespondence}, it is possible that the angle at $O$ is greater than $\pi$. Alternatively, when the two end points of each arm coalesce into a single vertex, this angle could also be greater than $\pi$. Additionally, this construction gives no guarantee that the polygon would not be self-intersecting.

    Since we have a finite list of all possible solutions which could produce a convex polygon, and for every solution we have an algorithm to construct the only polygon which could possibly be convex, one can create each of these polygons and see case-by-case whether each one is convex or not. Doing so, we conclude that there are only two convex arithmetic polygons: one coming from the solution $(2,4,5)$ and one coming from the solution $(9, 12, 14)$. The polygon corresponding to $(2,4,5)$ is the classic $3$-$4$-$5$ triangle, while the polygon corresponding to $(9, 12, 14)$ is the pentagon constructed in Figure \ref{F:Construction}.
\end{proof}

\vskip20pt\noindent {\bf Acknowledgement.} This material is based upon work supported by the National Science Foundation Graduate Research Fellowship Program under Grant No. DGE 21-46756. Any opinions, findings, and conclusions or recommendations expressed in this material are those of the authors and do not necessarily reflect the views of the National Science Foundation.

\printbibliography

\end{document}